\documentclass[leqno]{amsart}

\setlength{\textheight}{8.1in}
\setlength{\oddsidemargin}{0.6cm}
\setlength{\evensidemargin}{0.6cm}
\setlength{\textwidth}{5.8in}

\usepackage{amsfonts}
\usepackage{amssymb, latexsym, amsmath, pb-diagram}
\usepackage{pstricks-add}
\usepackage{lamsarrow, pb-lams}
\usepackage{multicol}
\usepackage{bm}
\usepackage{longtable}
\usepackage[mathscr]{eucal}
\usepackage{epic,eepic}
\usepackage{xy}
\xyoption{all}

\newtheorem{theorem}{Theorem}[section]
\newtheorem{lemma}[theorem]{Lemma}
\newtheorem{proposition}[theorem]{Proposition}

\numberwithin{equation}{section}

\def\qb{\hfill $\Box$}
\def\nr{\refstepcounter{thm}\thethm}
\def\cf{{\it cf.}\ }

\begin{document}

\title[$\beta_{p^2/p^2-1}$]{The secondary periodic element $\beta_{p^2/p^2-1}$ and its applications}
\author[J.~Hong \& X.~Wang]{Jianguo Hong and Xiangjun Wang}
\date{}
\thanks{The authors were supported by NSFC grant No. 11071125, 11261062 and SRFDP No.20120031110025}
\subjclass[2000]{Primary 55Q10, 55T15, 55Txx.}
\address{Jianguo Hong, School of Mathematical Science and LPMC, Nankai University, Tianjin 300071,
P. R. China}
\email{jghong66@163.com}
\address{Xiangjun Wang, School of Mathematical Science and LPMC, Nankai University, Tianjin 300071,
P. R. China}
\email{xjwang@nankai.edu.cn}
\keywords{stable homotopy groups of spheres, Adams-Novikov spectral sequence, infinite descent method}
\maketitle

\begin{abstract}In this paper, we prove that $\beta_{p^2/p^2-1}$ survives to $E_\infty$ in the
Adams-Novikov spectral sequence ($ANSS$) for all $p\geqslant 5$. As an easy consequence, we prove
that $\beta_{sp^2/j}$ is a permanent cycle for $s\geqslant 1$, $j\leqslant p^2-1$. From the Thom
map $\Phi: Ext^{s,t}_{BP_*BP}(BP_*, BP_*)\longrightarrow Ext^{s,t}_A(\mathbb{Z}/p, \mathbb{Z}/p)$,
we also see that $h_0h_3$ survives to $E_\infty$ in the classical Adams spectral sequence.
\end{abstract}

\section{Introduction}

  Let $p\geqslant 5$ be an odd prime. The Adams-Novikov spectral sequence ($ANSS$) based on the
Brown-Peterson spectrum is one of the most powerful tools to compute the $p$-component of the
stable homotopy groups of spheres $\pi_*S^0$ (\cf \cite{adams, Liulevicius, miller, ravenelbook1}).
The $ E_2$-term of the $ANSS$ is $Ext^{s,t}_{BP_*BP}(BP_*, BP_*)$.

  From \cite{Novikov, miller} we know that $Ext^1_{BP_*BP}(BP_*, BP_*)=H^1BP_*$ is generated by $\alpha_{sp^n/n+1}$
for $n\geqslant 0$, $p\nmid s \geqslant 1 $, where $\alpha_{sp^n/n+1}$ has order $p^{n+1}$.
$Ext^2_{BP_*BP}(BP_* $, $ BP_*)=H^2BP_*$ is the direct sum of cyclic groups generated by $\beta_{sp^n/j,i+1}$
for suitable $(n,s,j,i)$ (\cf \cite{miller, ravenelbook1, ravenelbook}).

   It is known that each element $\alpha_{sp^n/n+1}$ in $H^1BP_*$
is a permanent cycle in the $ANSS$ which represents  an element of im$J$ having the same order.
But we are far from fully determining which element of $\beta_{sp^n/j, i+1}$ in $H^2BP_*$ survives to
$E_\infty$.

    Let $\beta_{sp^n/j}$ denote $\beta_{sp^n/j,1}$. H.~Toda \cite{toda1, toda} proved $\alpha_1\beta_1^p$
is zero in $\pi_*S^0$. This relation supports a non-trivial Adams-Novikov differential
$d_{2p-1}(\beta_{p/p})=a\alpha_1\beta_1^p$, which
is called the Toda differential. Based on Toda differential,
D.~Ravenel \cite{ravenel} proved that
$$d_{2p-1} (\beta_{p^n/p^n})\equiv a\alpha_1\beta_{p^{n-1}/p^{n-1}}^p \quad  \text{mod}\quad \text{ker}\
\beta_1^{p(p^{n-1}-1)/(p-1)}$$
for $n \geqslant 1$. That is to say, $\beta_{p^n/p^n}$ can not survive to $E_\infty$ in the $ANSS$.
From this one can see that only  $\beta_{sp^n/j}\in H^2BP_*$ for
$s\geqslant 2$, $1\leqslant j\leqslant p^n$ or
$s=1$, $1 \leqslant j \leqslant p^n-1$  might survive to $E_\infty$ in the $ANSS$.
The following are some known results in this area:

  Oka \cite{oka75} proved that for $s=1$ , $1 \leqslant j \leqslant p- 1$ or $s\geqslant 2$, $1\leqslant j\leqslant p$,
$\beta_{sp/j}$ is a permanent cycle in the $ANSS$.

  Oka \cite{oka77} proved that for $ s=1 $, $ 1 \leqslant j\leqslant 2p-2 $ or $s\geqslant 2$, $1\leqslant j\leqslant 2p$,
  $\beta_{sp^2/j}$ is a permanent cycle in the $ANSS$.

  Later Oka \cite{oka82, oka83} generalized the result to $n\geqslant 2$, i.e. for $n\geqslant 2$;
$s=1$, $1\leqslant j \leqslant 2^{n-1}(p-1)$ or $s\geqslant 2$, $1\leqslant j\leqslant 2^{n-1}p$,
$\beta_{sp^n/j}$ survives to $E_{\infty}$ in the $ANSS$.

  Shimomura  \cite{shimomuraAGT} proved that for $s\geqslant 1$, $1\leqslant j \leqslant p^2-2$,
$ \beta_{sp^2/j} $ survives to $E_\infty$ in the $ANSS$.

  In this paper, we proved:

{\bf Theorem A} {\it Let $p\geqslant 5$ be an odd prime. Then $\beta_{p^2/p^2-1}$ is a permanent cycle
in the Adams-Novikov spectral sequence.}

{\bf Remark:} {\it At the prime $p=5$, D.~Ravenel in \cite{ravenelbook1} proved that $\beta_{5/5}^5=\beta_1x_{952}$
survives to an order $5$ element in $\pi_{990}S^0$ in the $ANSS$. Thus $\beta_{25/24}$ survives to the 4-fold Toda beacket
$\langle \beta_{5/5}^5, \alpha_1, 5, \alpha_1\rangle$. In this paper we will work on the case $p\geqslant 7$. }

 Let $M$ be the cofiber of the degree $p$ map $p : S^0\rightarrow S^0$,
\[
\xymatrix{S^0\ar[r]^p&S^0 \ar[r] & M.}
\]
There exists the Smith-Toda map $v_1^j:\Sigma^{|v_1^j|}M \rightarrow M$  and its cofiber is denoted by $M(1; j)$
$$\xymatrix{\Sigma^{|v_1^j|}M\ar[r]^-{v_1^j}&M \ar[r] &M(1,j)}.$$

D.~Ravenel  proved that

{\bf Theorem 7.12 \cite{ranenel2}} {\it Let $p \geqslant 5$ be an odd prime. If for some fixed $n \geqslant1$,
\begin{enumerate}
\item[(i)] the spectrum $M (1, p^n-1)$ is a ring spectrum,
\item[(ii)] $\beta_{p^n/p^n-1}$ is a permanent cycle and
\item[(iii)] the corresponding homotopy element has order $p$,
\end{enumerate}
then $\beta_{sp^n/j}$ is a permanent cycle (and the corresponding homotopy
element has order $p$) for all $s \geqslant 1$ and $1 \leqslant j \leqslant p^n-1$.}

  From \cite{oka79, oka82, oka83}, we know that $M(1, p^n-1)$ is a ring spectrum for $ n\geqslant 1$, $p\geqslant 5$.
Thus from the  theorem above and Theorem A, we have:

{\bf Proposition  B } {\it Let $p\geqslant 5$ be an odd prime. Then for $s\geqslant 1$, $j\leqslant p^2-1$,
$\beta_{sp^2/j}$ is a permanent cycle in the Adams-Novikov spectral sequence.}

It is not yet known that whether $\beta_{sp^2/p^2}$ for $s\geqslant 2$
 is a permanent cycle.

  Between the $ANSS$ and the classical Adams spectral sequence ($ASS$), there is the Thom reduction map
\[
\Phi : Ext^*_{BP_*BP} (BP_*, BP_*)\longrightarrow Ext^*_{A} (\mathbb{Z}/p, \mathbb{Z}/p)
\]
and $\Phi(\beta_{p^2/p^2-1})=h_0h_3$. Thus

  {\bf Corollary C} {\it  Let $p\geqslant 5$ be an odd prime. Then $h_0h_3$ is a permanent cycle in the classical Adams
  spectral sequence.}

   This paper is arranged as follows. In section 2  we recall the construction of the
topological small descent spectral sequence ($TSDSS$) and the
small descent spectral sequence ($SDSS$), where the $SDSS$ is a spectral sequence that converges to
$Ext_{BP_*BP}^{s,t}(BP_*, BP_*)$ started from the $Ext$ groups of a complex with $p$-cells.
Then we describe the $E_1$-terms of the $SDSS$. In section 3 we compute the Adams-Novikov $E_2$-term
$Ext_{BP_*BP}^{s,t}(BP_*, BP_*)$ subject to $t-s=q(p^3+1)-3$ by the $SDSS$. In section 4, a non-trivial Adams-Novikov
differential $d_{2p-1}(h_{20}b_{11}\gamma_s)=\alpha_1\beta_1^ph_{20}\gamma_s$ is proved. From which
we prove our main theorem by showing that $d_r(\beta_{p^2/p^2-1})=0$ in section 5.

\section{The small descent spectral sequence and the ABC Theorem}

  In 1985, D.~Ravenel \cite{ravenel85, Rav02, ravenelbook1, ravenelbook} introduced the {\it method  of infinite descent}
and used it to  compute the first thousand stems of the stable homotopy groups of spheres at the prime 5.
This method is an approach to finding the $E_2$-term of the $ANSS$ by the following spectral sequence referred as the
{\it small descent spectral sequence} ($SDSS$).

  Hereafter we set that $q=2p-2$. Let $T(n)$ be the Ranevel spectrum (\cf \cite{ravenelbook1} Section 5, Chapter 6)
characterized by
\[
BP_*T(n)=BP_*[t_1, t_2, \cdots, t_n].
\]
Then we have the following diagram
\begin{equation*}
\xymatrix{S^0=T(0)\ar[r] & T(1)\ar[r] & T(2)\ar[r] & \cdots\ar[r] & T(n)\ar[r]& \cdots \ar[r] & BP,
}
\end{equation*}
where $S^0$ denote the sphere spectrum localized at an odd prime $p\geqslant 5$.
Let $T(0)_{p-1}$ and $T(0)_{p-2}$ denote the $q(p-1)$ and $q(p-2)$ skeletons of $T(1)$ respectively,
they are denoted by  $X$ and $\overline{X}$ for simple.
Then
\begin{align*}
X= & S^0\cup_{\alpha_1}e^q\cup\cdots \cup_{\alpha_1}e^{(p-2)q}\cup_{\alpha_1}e^{(p-1)q}
& & \mbox{and} &
\overline{X}= & S^0\cup_{\alpha_1}e^q\cup\cdots\cup_{\alpha_1}e^{(p-2)q}.
\end{align*}
The $BP$-homologies of them are
\begin{align*}
BP_*(X)= & BP_*[t_1]/\langle t_1^p\rangle & & \mbox{and} & BP_*(\overline{X})= & BP_*[t_1]/\langle t_1^{p-1}\rangle.
\end{align*}

    From the definition above we get the following cofibre sequences
\begin{equation}\tag{\nr}
\xymatrix{
S^0 \ar[r]^-{i'} & X\ar[r]^-{j'} & \Sigma^{q}\overline{X}\ar[r]^-{k'} & S^1,
}
\end{equation}
\begin{equation}\tag{\nr}
\xymatrix{
\overline{X} \ar[r]^-{i''} & X\ar[r]^-{j''}& S^{(p-1)q}\ar[r]^-{k''}&\Sigma\overline{X},
}
\end{equation}
and the short exact sequences of $BP$-homologies
\begin{equation}\tag{\nr}
\xymatrix{
0\ar[r] & BP_*S^0 \ar[r]^-{i'_*} & BP_*X\ar[r]^-{j'_*} & BP_*\Sigma^{q}\overline{X}\ar[r] & 0,
}
\end{equation}
\begin{equation}\tag{\nr}
\xymatrix{
0\ar[r] & BP_*\overline{X} \ar[r]^-{i''_*} & BP_*X \ar[r]^-{j''_*} & BP_*S^{(p-1)q}\ar[r]  & 0.
}
\end{equation}

     Put (2.3) and (2.4) together, one has the following long exact sequence
\begin{equation}\tag{\nr}
\xymatrix{
0\ar[r]& BP_*S^0 \ar[r] & BP_*(X)\ar[r] & BP_*(\Sigma^{q}X)\ar[r] & BP_*(\Sigma^{pq}X)\ar[r] & \cdots .
}
\end{equation}
Put (2.1) and (2.2) together, one has the following Adams diagram of cofibres
\begin{equation}\tag{\nr}
\xymatrix{
S^0\ar[d] & \Sigma^{q-1}\overline{X}\ar[l]\ar[d] & S^{pq-2}\ar[l]\ar[d] & \Sigma^{(p+1)q-3}\overline{X}\ar[l]\ar[d]
 & \cdots \ar[l]\\
X & \Sigma^{q-1}X & \Sigma^{pq-2}X & \Sigma^{(p+1)q-3}X. &
}
\end{equation}
Thus one has:

  {\bf Proposition 2.1} {\it (Ravenel \cite{ravenelbook1} 7.4.2 Proposition)
Let $X$ be as above. Then
\begin{enumerate}
\item[(a)] There is a spectral sequence converging to  $Ext_{BP_*BP}^{s+u,*}(BP_*, BP_*S^0)$ with $E_1$-term
\begin{align*}
E_1^{s, t, u}= & Ext^{s, t}_{BP_*BP}(BP_*, BP_*X)\otimes E[\alpha_{1}]\otimes P[\beta_1], \hskip .4cm\mbox{where} \\
 & E_1^{s,t,0}=Ext^{s,t}_{BP_*BP}(BP_*, BP_*X), \hskip .8cm \alpha_{1}\in E_1^{0, q, 1},\hskip .6cm \beta_1\in E_1^{0,qp, 2}
\end{align*}
and $d_r:E_r^{s, t, u}\longrightarrow E_r^{s-r+1, t, u+r}$. Where
$E[-]$ denotes the exterior algebra and $P[-]$ denotes the polynomial algebra on the indicated generators.
This spectral sequence is referred as the small descent spectral sequence ($SDSS$).
\item[(b)] There is a spectral sequence converging to $\pi_*S^0$ with $E_1$-term
\begin{align*}
E_1^{s, t}= & \pi_*(X)\otimes E[\alpha_{1}]\otimes P[\beta_1], \hskip .4cm\mbox{where} \\
 & E_1^{0,t}=\pi_t(X), \hskip .8cm \alpha_{1}\in E_1^{1, q},\hskip .6cm \beta_1\in E_1^{2, pq}
\end{align*}
and $d_r: E_r^{s, t}\longrightarrow E_r^{s+r, t+r-1}$.
This spectral sequence is referred as the
topological small descent spectral sequence ($TSDSS$).
\end{enumerate}
}

The above two spectral sequences produce the $0$-line and the $1$-line
$Ext_{BP_*BP}^{0,*}(BP_*, BP_*S^0)$,
$ Ext_{BP_*BP}^{1,*}(BP_*, BP_*S^0) $ or the
corresponding elements in $\pi_*S^0$ by $Ext^{0, *}_{BP_*BP}(BP_*, BP_*X)$ and $Ext^{1, *}_{BP_*BP}(BP_*, BP_*X)$.
$ Ext_{BP_*BP}^{s,*}(BP_*, BP_*S^0)\ (s\geqslant 2) $ or the
corresponding elements in $\pi_*S^0$  is produced by $Ext^{s, *}_{BP_*BP}(BP_*, BP_*X)$ $ (s \geqslant 2)$ as described as
the following ABC Theorem.

  {\bf ABC Theorem} {\it ( \cite{ravenelbook} 7.5.1 ABC Theorem) For $p>2$ and $t-s<q(p^3+p-1)-3$, $s\geqslant 2$
\[
Ext^{s,t}_{BP_*BP}(BP_*, BP_*X)=A\oplus B\oplus C,
\]
where $A$ is the $\mathbb{Z}/p$-vector space spanned by
\begin{align*}
A= & \left\{\beta_{ip},\ \beta_{ip+1}| i\leqslant p-1\right\}
\cup \left\{\beta_{p^2/p^2-j}|0\leqslant j\leqslant p-1\right\},\\
B= & R\otimes\left\{\gamma_k|k\geqslant 2\right\}
\end{align*}
where
\[
R=P[b_{20}^p]\otimes E[h_{20}]\otimes\mathbb{Z}/p\left\{
\left\{b_{11}^k|0\leqslant k\leqslant p-1\right\}\cup\left\{h_{11}b_{20}^k|0\leqslant k\leqslant p-2\right\}\right\},
\]
and
\[
C^{s,t}=\bigoplus_{i\geqslant 0}R^{s+2i, t+i(p^2-1)q}.
\]
}

 From the generators of $R$ and \cite{ravenel85} 4.11, 4.12 Theorem, we read the generators of $ C $ as follows:

Let $i=jp+m$. Then from $R^{s+\underline{2i}, t+\underline{i}(p^2-1)q}\subset C^{s,t}$ we have:

 \begin{enumerate}
\item $b^{(j+1)p}_{20} \in R^{2(p-m)+2(\underline{jp+m}), t+(\underline{jp+m})(p^2-1)q} \subset C^{2(p-m), t}$,
which is represented by
\[
b^{p-m-1}_{20}u_{jp+m}
\]
for $p -1 \geqslant m \geqslant 1$. From which we get
\[
b^{p-m-1}_{20}u_{jp+m}\otimes E[h_{20}]\otimes
\left\{b_{11}^k|0\leqslant k\leqslant p-1\right\}\cup\left\{h_{11}b_{20}^k|0\leqslant k\leqslant p-2\right\},
\]
where
\[
u_{jp+m} \in C^{2,q\left[(j+1)p^2+(j+m+1)p+m\right]}.
\]
\item $b^k_{11}b^{jp}_{20} \in R^{2(k-m)+2(\underline{jp+m}), t+(\underline{jp+m})(p^2-1)q} \subset C^{2(k-m), t}$,
which is represented by
\[
b^{k-m-1}_{11}\beta_{(j+1)p/p-m}
\]
for $p -1 \geqslant k \geqslant m+1 \geqslant 1$. From which we get
\[
b^{k-m-1}_{11}\beta_{(j+1)p/p-m}\otimes E[h_{20}],
\]
where
\[
\beta_{(j+1)p/p-m} \in C^{2,q\left[(j+1)p^2+jp+m\right]}.
\]
\begin{itemize}
\item Especially $h_{20}b^{p-1}_{11}b^{jp}_{20} \in R^{3+2(\underline{jp+p-2}), t+(\underline{jp+p-2})(p^2-1)q}
   \subset C^{3, t}$
is represented by $h_{11}\beta_{(j+1)p/1,2}$, which is an element of order $p^2$.
\end{itemize}
\item $h_{11}b^k_{20}b^{jp}_{20} \in R^{2(k-m)+1+2(\underline{jp+m}), t+(\underline{jp+m})(p^2-1)q}
\subset C^{2(k-m)+1, t}$, which is represented by
\[
b^{k-m-1}_{20}\eta_{jp+m+1}
\]
for $p -2 \geqslant k \geqslant m+1 \geqslant 1$,
where
\[
\eta_{jp+m+1}=h_{11}u_{jp+m} \in C^{3, q\left[(j+1)p^2+(j+m+2)p+m\right]}.
\]
\item $h_{20}h_{11}b^k_{20}b^{jp}_{20} \in R^{2(k-m+1)+2(\underline{jp+m}), t+(\underline{jp+m})(p^2-1)q} \subset
C^{2(k-m+1)t}$,  which is represented by
\[
b^{k-m}_{20}\beta_{jp+m+2}
\]
for $p -2 \geqslant k \geqslant m \geqslant 0$,
where
\[
\beta_{jp+m+2} \in C^{2,q\left[jp^2+(j+m+2)p+m+1\right]}.
\]
\begin{itemize}
  \item Especially $h_{20}h_{11}b^{p-2}_{20}b^{jp}_{20} \in R^{2+2(\underline{jp+p-2}), t+(\underline{jp+p-2})(p^2-1)q}
  \subset C^{2, t}$ is  represented by
$\beta_{(j+1)p/1,2}$, which is an element of order $p^2$.
\end{itemize}
 \end{enumerate}

    From the ABC Theorem above, we know that $Ext^{s,t}_{BP_*BP}(BP_*, BP_*X)$ for $s\geqslant 2$, $t-s<q(p^3+p-1)-3$ is the
$\mathbb{Z}_{(p)}$-module generated by the following generators, here the generators are listed as generators, total degree $t-s$
and $t-s$ $mod$ $pq-2$ , range of index; where $pq-2=2p^2-2p-2$ is the total degree of $\beta_1 \in E_1^{0,qp,2}$ in the $SDSS$.

\

Generators of A

\begin{longtable}{lllll}
 Generators  & & $t-s$  and $t-s$ $mod$ $pq-2$ & Range of index\vspace{3mm}\\
  $\beta_{ip}$  &        & $q[ip^2+ip-1]-2$ \\
                & $\equiv$ & $2(i-1)p+2i$ & if $i\leqslant p-2 $ \\
                 &$\equiv$ & $0$                                    & if $i=p-1$ \vspace{3mm} \\[0.6ex]
  $\beta_{ip+1}$ &         & $q[ip^2+(i+1)p]-2$\\
                 &$\equiv$ & $2ip+2i$                                & if $ \ i\leqslant p-2$\\
                 &$\equiv$ & $\underline{2p }_{ 2p}$       & if $ \ i=p-1 $\vspace{3mm}\\[0.6ex]
 $\beta_{p^2/p^2-j}$ &     & $q[p^3+j]-2$ \\
                  &$\equiv$& $\underline{2(j+1)p-2j}_{ 2p}$ & if $\  j \leqslant p-2 $ \\
                  &$\equiv$& $4$                                     & if $\  j=p-1$\\
\end{longtable}

Generators of B

\begin{longtable}{lll}
 Generators  & $t-s$ and $t-s$ mod $pq-2$ & Range of index\\[0.6ex]
\endfirsthead
 Generators  & $t-s$ and $t-s$ mod $pq-2$ & Range of index\\[0.6ex]
\endhead
\multicolumn{2}{r}{continue to next page\dots} \\
\endfoot
\endlastfoot
  $h_{1 1}b_{2 0}^k\gamma_i$ & $\ q[(i+k)p^2+(i+k)p+i-2]-2k-4$  &  \\
                                &  $\equiv 2(k+2i-2)p$ & if  $2\leqslant i$,\ $ k+2i\leqslant p$\\
                              &$\equiv 2(k+2i-p-1)p+2$ & if $k+2i> p$\vspace{3mm}\\[0.6ex]
  $h_{2 0}h_{1 1}b_{2, 0}^k\gamma_i$  & \multicolumn{2}{l}{ $\ \ q[(i+k)p^2+(i+k+1)p+i-1]-2k-5$  }  \\
                                         &  $\equiv 2(k+2i-1)p-1$ & if $k+2i< p$\\
                                        &$\equiv 2(k+2i-p)p+1$ & if $ k+2i\geqslant p$\vspace{3mm}\\[0.6ex]
 $b_{1 1}^k\gamma_i$    & $\ \ q[(i+k)p^2+(i-1)p+i-2]-2k-3$  &  \\
                        & $\equiv 2(k+2i-2)p-2k-1$ & if  $k+2i\leqslant p+1$\\
                        & $\equiv 1$   &  if $k=0$, $2i=p+1$\\
                        & $\equiv \underline{2(k+2i-p-1)p-2k+1}_{4p-3}$ & if $k+2i\geqslant p+2$\vspace{3mm}\\[0.6ex]
 $h_{2 0}b_{1 1}^k\gamma_i$   & $\ \ q[(i+k)p^2+ip+i-1]-2k-4$  &  \\
                              & $\equiv 2(k+2i-1)p-2(k+1)$ & if $k+2i\leqslant p$\\
                              & $\equiv \underline{2(k+2i-p)p-2k}_{2p}$ & if $k+2i> p$\\
\end{longtable}

Generators of C 

\begin{longtable}{lll}
 Generators  & $t-s$ and $t-s$ mod $pq-2$& Range of index\\[0.6ex]
\endfirsthead
 Generators  & $t-s$ and $t-s$ mod $pq-2$& Range of index\\[0.6ex]
\endhead
\multicolumn{2}{r}{continue to next page\dots} \\
\endfoot
\endlastfoot
$b_{1 1}^{k}b_{2 0}^{p-m-1}u_{jp+m}$ &\multicolumn{2}{l}{$\ \ q[(p-m+j+k+1)p^2+jp+m]-2(p-m+k)$  }  \\[1ex]
                                     & $\equiv 2(j+k+1)p+2(j-k+1) $ & for $ 1\leqslant m\leqslant p-1$ \vspace{3mm}\\[1ex]
\multicolumn{3}{l}{$h_{2 0}b_{1 1}^{k}b_{2, 0}^{p-m-1}u_{jp+m}$ $\ \ q[(p-m+j+k+1)p^2+(j+1)p+m+1]-2(p-m+k)-1$ }   \\[1ex]
                                  & $\equiv 2(j+k+2)p+2(j-k+1)-1$ & if $ j+k\leqslant p-4$ \\
                                  &  &  or $j+k=p-3,2j<p-5$\\
                                  & $\equiv 2(j-k+2)p-1$ & if $ j+k=p-3, 2j \geqslant p-5$ \\[1.6ex]
\multicolumn{3}{l}{$h_{1 1}b_{2 0}^{k+p-m-1}u_{jp+m}$ $\ \ q[(p-m+j+k+1)p^2+(j+k+1)p+m]-2(p-m+k)-1$  }   \\[1ex]
                            & $\equiv 2(j+k+2)p+2(j-p)+3$ & for $2\leqslant m\leqslant p-1$ \\[1ex]
\multicolumn{3}{l}{$h_{2 0}h_{1 1}b_{2, 0}^{k+p-m-1}u_{jp+m}$ $\ \ q[(p-m+j+k+1)p^2+(j+k+2)p+m+1]-2(p-m+k+1)$ }   \\[1ex]
                               & $\equiv 2(j+k+2)p+2j+2$ & if $j+k\leqslant  p-4$  \\
                               & $\equiv 2j+4$ & if $j+k = p-3$\\[1.6ex]
$b_{1 1}^{k-m-1}\beta_{(j+1)p/p-m}$ & \multicolumn{2}{l}{\ \ \ $q[(j+k-m)p^2+jp+m]-(2k-2m)$}    \\[1ex]
                              & $\equiv 2(j+k)p+2(j-k)$ &if \ $j+k\leqslant p-2 $\\
                              &      &or $ j+k =p-1$, $2j < p-1 $\\
                             & $\underline{\equiv 2(j+k-p+1)p+2(j-k+1)}_{2p} $ &if $ j+k\geqslant p $\\
                              & & or $ j+k =p-1$, $2j\geq p-1 $\vspace{3mm}\\[1ex]
\multicolumn{3}{l}{$h_{2 0}b_{1 1}^{k-m-1}\beta_{(j+1)p/p-m}$ $\ \  q[(j+k-m)p^2+(j+1)p+m+1]-(2k-2m+1)$  }  \\[1ex]
                  &$\underline{\equiv 2(j+k+1)p+2(j-k)-1}_{4p-3}$ & if $ j+k\leqslant p-3 $\\
                  & & or $ j+k =p-2$, $2j\leqslant p-3$\\
                  &$\equiv 2(j+k-p+2)p+2(j-k)+1 $&if $j+k > p-2 $ \\
       &  &  or $j+k =p-2$, $2j > p-3$ \\
       & $\equiv 1$ &  if $j=p-2$, $k =p-1$\vspace{3mm}\\[0.6ex]
$h_{1, 1}\beta_{(j+1)p/1, 2}$& \multicolumn{2}{l}{\ \ \ $q[(j+1)p^2+(j+2)p-1]-3$  }  \\[0.6ex]
                                         & $\equiv 2jp+2(j+1)+1$& if $j\leqslant p-3$\\
                                         &$\equiv 1$& if $ j=p-2$\vspace{3mm}\\[0.6ex]
$ b_{2, 0}^{k-m-1}\eta_{jp+m+1}$ &\multicolumn{2}{l}{$\ \ q[(j+k-m)p^2+(j+k+1)p+m]-(2k-2m+1)$ }   \\[0.6ex]
                         & $\equiv 2(j+k)p+2j+1$ & if $j+k \leqslant p-2$\\
                         &$\underline{\equiv 2(j+k-p+2)p+2(j-p)+3}_{4p-3}$ & if  $ j+k > p-2$\vspace{3mm}\\
$b_{2, 0}^{k-m}\beta_{jp+m+2}$ &\multicolumn{2}{l}{$\ \ q[(j+k-m)p^2+(j+k+2)p+m+1]-2(k-m+1)$ }   \\[0.6ex]
                         &  $\underline{\equiv 2(j+k+1)p+2j}_{2p}$ & if $j+k\leqslant p-3$\\
                        &$\equiv 2(j+k-p+3)p+2(j-p)+2 $ & if $j+k > p-3$\\
                        &$\equiv 0$ & if $j=k=p-2$\vspace{3mm}\\[0.6ex]
$\beta_{(j+1)p/1, 2}$& \multicolumn{2}{l}{\ \ \ $q[(j+1)p^2+(j+1)p-1]-2$  }  \\[0.6ex]
                                         & $\equiv 2jp+2(j+1)$& if $j\leqslant p-3$\\
                                         &$\equiv 0$& if $ j=p-2$\\[0.6ex]
\end{longtable}
\textbf{Remark.}
The Adams-Novikov spectral sequence for the spectrum $ X $  collapses from $E_2$-term
$Ext^{s, t}_{BP_*BP}(BP_*, BP_*X)$  in the range  $t-s<q(p^3+p-1)-3$, since there are no elements with filtration $ > 2p$.
Thus we actually get the homotopy groups $\pi_{t-s}(X)$ in this range.

\section{The $ANSS$ $E_2$-term $Ext_{BP_*BP}^{s,t}(BP_*, BP_*)$ at $t-s=q(p^3+1)-3$}

    Consider the Adams-Novikov differential $d_r: E_r^{s,t}\rightarrow E_r^{s+r, t+r-1}$ in the $ANSS$.
From the total degree of $\beta_{p^2/p^2-1}$, we know that $d_r(\beta_{p^2/p^2-1})\in Ext_{BP_*BP}^{s, t}(BP_*, BP_*)$
such that $t-s=q(p^3+1)-3$.
The $SDSS$ $E_1^{s,t,u}$ converges to $Ext_{BP_*BP}^{s+u,t}(BP_*, BP_*)$. Fix $t-s-u=q(p^3+1)-3$, we have:
\begin{lemma} Fix $t-s-u=q(p^3+1)-3$, the $E_1$-term $E_1^{s,t,u}$ of the $SDSS$ is the $\mathbb{Z}/p$-module
generated by the following $8$ generators:
\begin{align*}
\mathfrak{g}_1= & \alpha_1\beta_1^{p^2-1}\beta_2 \in E_1^{2,*,2p^2-1}; &
\mathfrak{g}_2= &\beta_1^{p^2-p}h_{20}\beta_{p/p}\in E_1^{3,*,2p^2-2p};\\
\mathfrak{g}_3= & \alpha_1\beta_1^{\frac{p^2-2p-1}{2}}h_{2,0}\gamma_{\frac{p+1}{2}} \in E_1^{4,*,p^2-2p}; &
\mathfrak{g}_4= & \beta_1^{\frac{p^2-6p+1}{2}}b_{11}^{2}\gamma_{\frac{p+1}{2}} \in E_1^{7, *, p^2-6p+1}; \\
\mathfrak{g}_5= & \alpha_1\beta_1^{mp-\frac{p-1}{2}}b_{11}^{\frac{p-1}{2}-m}\beta_{(\frac{p+1}{2})p/p-m}
                      \in E_1^{p+1-2m, *, *}; &
\mathfrak{g}_6= & \beta_1^{p-1} \eta_{(p-3)p+3} \in E_1^{3,*, 2p-2};\\
\mathfrak{g}_7= & \alpha_1\beta_{(p-1)p+1}\in E_1^{2,q(p^3+1),1}; &
\mathfrak{g}_8= & \alpha_1\beta_{p^2/p^2} \in E_1^{2,q(p^3+1),1}.
\end{align*}
\end{lemma}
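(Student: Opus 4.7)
By Proposition 2.1(a) the $E_1$-term of the SDSS factors as
\[
E_1^{s,t,u}=Ext^{s,t}_{BP_*BP}(BP_*,BP_*X)\otimes E[\alpha_1]\otimes P[\beta_1],
\]
so every basis element has the form $g\cdot \alpha_1^{\epsilon}\beta_1^{k_\beta}$ with $g$ drawn from the ABC Theorem's generator list, $\epsilon\in\{0,1\}$, and $k_\beta\geqslant 0$. Its total stem is
\[
(t_g-s_g)+\epsilon(q-1)+k_\beta(pq-2).
\]
Fixing this to $q(p^3+1)-3$ is equivalent to a residue condition on $t_g-s_g$ modulo $pq-2$ together with $k_\beta\geqslant 0$, so the lemma is a finite enumeration.

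First I would reduce the target stem modulo $pq-2=2p^2-2p-2$. From $2p^2\equiv 2p+2$ one computes $qp^3=2p^4-2p^3\equiv 2p^3+2p^2-2p^3=2p^2\equiv 2p+2$, whence $q(p^3+1)-3\equiv 4p-3\pmod{pq-2}$. Consequently only two residue classes of $t_g-s_g$ are admissible: $4p-3$ (when $\epsilon=0$) and $2p$ (when $\epsilon=1$, subtracting $q-1=2p-3$).

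Next I would sweep through the tables A, B, C compiled in Section 2. The rightmost column of each table already records $t_g-s_g\bmod(pq-2)$ as a function of the parameters $(i,j,k,m)$, and the underlined entries $\underline{\ \cdot\ }_{2p}$ and $\underline{\ \cdot\ }_{4p-3}$ highlight exactly the parameter regimes capable of producing the required residues. For each resulting candidate $g$ I would solve for
\[
k_\beta=\frac{q(p^3+1)-3-(t_g-s_g)-\epsilon(q-1)}{pq-2}
\]
and keep those with $k_\beta\in\mathbb{Z}_{\geqslant 0}$ whose parameters satisfy the ABC range constraints (such as $0\leqslant k\leqslant p-1$ and $m\leqslant k-1$ in the relevant C-families). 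The surviving basis elements match the stated list: $\mathfrak{g}_7,\mathfrak{g}_8$ arise from the A-entries $\beta_{(p-1)p+1}$ and $\beta_{p^2/p^2}$ paired with $\alpha_1$; $\mathfrak{g}_1$ arises from $\beta_2$ paired with $\alpha_1\beta_1^{p^2-1}$; $\mathfrak{g}_3,\mathfrak{g}_4$ arise from the B-generators $h_{20}\gamma_{(p+1)/2}$ and $b_{11}^2\gamma_{(p+1)/2}$; and $\mathfrak{g}_2,\mathfrak{g}_5,\mathfrak{g}_6$ arise from the C-generators $h_{20}\beta_{p/p}$, $b_{11}^{(p-1)/2-m}\beta_{((p+1)/2)p/p-m}$ (a subfamily indexed by $1\leqslant m\leqslant(p-1)/2$), and $\eta_{(p-3)p+3}$ respectively.

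The main obstacle is the case analysis for family C: the formulas for $b_{11}^{k-m-1}\beta_{(j+1)p/p-m}$, $h_{20}b_{11}^{k-m-1}\beta_{(j+1)p/p-m}$, $b_{20}^{k-m-1}\eta_{jp+m+1}$, and $b_{20}^{k-m}\beta_{jp+m+2}$ each split into two or three parameter regimes with distinct reductions mod $pq-2$, and one must systematically check that no admissible parameter triple $(j,k,m)$ is overlooked while all spurious ones (those forcing $k_\beta<0$ or violating the ABC range) are discarded. The subfamily $\mathfrak{g}_5$ is especially delicate: its admissible window $1\leqslant m\leqslant(p-1)/2$ emerges only from the simultaneous nonnegativity constraints $k_\beta=mp-(p-1)/2\geqslant 0$ and $(p-1)/2-m\geqslant 0$ on the $b_{11}$-exponent.
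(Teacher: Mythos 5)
Your proposal is correct and follows essentially the same route as the paper: both reduce the target stem $q(p^3+1)-3$ modulo $qp-2$ to the residues $4p-3$ (no $\alpha_1$ factor) and $2p$ (with an $\alpha_1$ factor), read the admissible ABC-generators off the underlined table entries, and then solve for the nonnegative $\beta_1$-exponent. The only difference is that you make the nonnegativity check on $k_\beta$ and the parameter window for $\mathfrak{g}_5$ explicit, which the paper leaves implicit in ``computing the filtration of the corresponding generators.''
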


\begin{proof}
  Fix $t-s-u=q(p^3+1)-3$. From the ABC Theorem, we know that the generators of the $E_1$-terms in the $SDSS$ are of the form
$W=\beta_1^k w$
or $W=\alpha_1 \beta_1^k w,$ where $ w $ is an element listed in the ABC Theorem.

{\bf 1.} If a generator of  $E_1^{s,t,u}$ is of the form $W=\beta_1^k w$, then the total degree of $\beta_1^pw$
is $q(p^3+1)-3$ and the total degree of $w$ is $q(p^3+1)-3$ modulo the total degree of $\beta_1$ which is $t-u=qp-2$.
Note that
\[
q(p^3+1)-3\equiv 4p-3 \hspace{10mm} \mbox{$mod$ $qp-2$,}
\]
we list all the generators whose total degree might be $4p-3$ $mod$ $qp-2$,
which are marked with underline and subscript $4p-3$ in the table for ABC Theorem.
\begin{align*}
 &  \mbox{$b_{11}^k\gamma_i$} & & \mbox{at $k=2$ and $i=(p+1)/2$;}\\
 &  \mbox{$h_{20}b_{11}^{k-m-1}\beta_{(j+1)p/p-m}$} & & \mbox{at $k=1$ and $j=0$;}\\
 &  \mbox{$b_{20}^{k-m-1}\eta_{jp+m+1}$} & & \mbox{at $k=3$ and $j=p-3$.}
\end{align*}
From which we get the following generators in $E_1^{s, t, u}$:
\begin{align*}
& b_{11}^2\gamma_{\frac{p+1}{2}} & \Longrightarrow & & &
\mathfrak{g}_4=  \beta_1^{\frac{p^2-6p+1}{2}}b_{11}^{2}\gamma_{\frac{p+1}{2}}\in E_1^{7,*,p^2-6p+1}; \\
& h_{20}\beta_{p/p} & \Longrightarrow & & & \mathfrak{g}_2= \beta_1^{p^2-p}h_{20}\beta_{p/p}\in E_1^{3,*,2p^2-2p};\\
& \eta_{(p-3)p+3} & \Longrightarrow & & & \mathfrak{g}_6=  \beta_1^{p-1} \eta_{(p-3)p+3}\in E_1^{3,*,2p-2}.
\end{align*}

{\bf 2.} If a generator of $E_1^{s,t,u}$ is of the form $W=\alpha_1 \beta_1^k w_1 $, then
from the total degree of $\alpha_1$ being $t-u=2p-3$ we see that the total degree of
$w_1$ is $2p$ modulo $qp-2$. Similarly we can find all such  $w_1$'s, which are marked with underline
and subscript $2p$ in the ABC Theorem:
\begin{align*}
& \beta_{(p-1)p+1}; & & \beta_{p^2/p^2}; & & h_{20}\gamma_{\frac{p+1}{2}}; & & b_{11}^{\frac{p-1}{2}-m}\beta_{(\frac{p+1}{2})p/p-m}; &
&  \beta_{2}.
\end{align*}
From which we get the following generators in $E_1^{s,t,u}$:
\begin{align*}
\mathfrak{g}_7= & \alpha_1\beta_{(p-1)p+1} ; &
\mathfrak{g}_8= & \alpha_1\beta_{p^2/p^2};   \\
\mathfrak{g}_3= & \alpha_1\beta_1^{\frac{p^2-2p-1}{2}}h_{2,0}\gamma_{\frac{p+1}{2}}; &
\mathfrak{g}_5= & \alpha_1\beta_1^{mp-\frac{p-1}{2}}b_{11}^{\frac{p-1}{2}-m}\beta_{(\frac{p+1}{2})p/p-m}; \\
\mathfrak{g}_1= & \alpha_1\beta_1^{p^2-1}\beta_2.
\end{align*}
Computing the filtration of the corresponding generators, we get the lemma.
\end{proof}

\begin{theorem}
Fix $t-s=q(p^3+1)-3$, the Adams-Novikov $E_2$-term $Ext^{s,t}_{BP_*BP}(BP_*, BP_*)$ is the $\mathbb{Z}/p$-module generated by the following 6 elements
\begin{align*}
\mathfrak{g}_1= & \alpha_1\beta_1^{p^2-1}\beta_2 \in Ext^{2p^2+1,*}_{BP_*BP}; \\
\mathfrak{g}_3= & \alpha_1\beta_1^{\frac{p^2-2p-1}{2}}h_{2,0}\gamma_{\frac{p+1}{2}} \in Ext_{BP_*BP}^{p^2-2p+4,*}; &
\mathfrak{g}_4= & \beta_1^{\frac{p^2-6p+1}{2}}b_{11}^{2}\gamma_{\frac{p+1}{2}} \in Ext_{BP_*BP}^{p^2-6p+8,*}; \\
& & \mathfrak{g}_6= & \beta_1^{p-1} \eta_{(p-3)p+3} \in Ext_{BP_*BP}^{2p+1,*};\\
\mathfrak{g}_7= & \alpha_1\beta_{(p-1)p+1}\in Ext^{3,q(p^3+1)}; &
\mathfrak{g}_8= & \alpha_1\beta_{p^2/p^2} \in Ext^{3,q(p^3+1)}.
\end{align*}
\end{theorem}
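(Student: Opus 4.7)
The plan is to apply the $SDSS$ of Proposition 2.1 (a) to the eight $E_1$-generators provided by Lemma 3.1 at total degree $t-s-u = q(p^3+1)-3$, and to identify the two that fail to survive to $E_\infty$.

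First I would use the enumeration of Lemma 3.1 (together with the full ABC list) to catalogue, in each of the eight tri-degrees $(s,t,u)$, all $E_1$-generators at the potential source and target tri-degrees $(s\pm(r-1),\,t,\,u\mp r)$ of an $SDSS$ differential $d_r$. Because $\alpha_1$ has $u$-degree $1$ and $\beta_1$ has $u$-degree $2$, and the relevant range is restricted by the ABC bound $t-s<q(p^3+p-1)-3$, this reduces the differential search to a finite check.

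Second I would establish two (families of) nontrivial $SDSS$ differentials killing $\mathfrak{g}_2$ and $\mathfrak{g}_5$. For $\mathfrak{g}_2=\beta_1^{p^2-p}h_{20}\beta_{p/p}$ the needed input is the hidden multiplicative relation tying $h_{20}\beta_{p/p}$ in filtration $3$ to a $\beta_1$-extension of a class in filtration $2$, obtained from the $\alpha_1,\beta_1$-action on $\beta_{p/p}$ inside $Ext(BP_*, BP_*X)$. For the $m$-indexed family $\mathfrak{g}_5 = \alpha_1\beta_1^{mp-\frac{p-1}{2}}b_{11}^{\frac{p-1}{2}-m}\beta_{(\frac{p+1}{2})p/p-m}$, I would show that each member is a boundary supported by the corresponding generator $b_{11}^{\frac{p-1}{2}-m}\beta_{(\frac{p+1}{2})p/p-m}$ of the $C$-part of the ABC Theorem paired with the appropriate power of $\beta_1$; the relevant Massey products will set the differential length.

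Third I would verify that $\mathfrak{g}_1,\mathfrak{g}_3,\mathfrak{g}_4,\mathfrak{g}_6,\mathfrak{g}_7,\mathfrak{g}_8$ are all permanent cycles in the $SDSS$, by inspecting the catalogue from the first step to see that no admissible $d_r$-target or $d_r$-source sits in the correct slot. Collecting these computations yields the stated $6$-generator description. The main obstacle is the second step: pinning down the precise differentials on $\mathfrak{g}_2$ and on the $\mathfrak{g}_5$ family. Unlike $E_1$-level $\alpha_1$- or $\beta_1$-multiplications, these differentials require detailed control of the hidden $\alpha_1,\beta_1$-action on $h_{20}\beta_{p/p}$ and on $\beta_{(\frac{p+1}{2})p/p-m}$ in $Ext(BP_*, BP_*X)$, extracted from the Massey product structure underlying the ABC Theorem.
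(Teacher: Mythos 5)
Your overall strategy --- feed the eight generators of Lemma 3.1 into the $SDSS$ and show that exactly $\mathfrak{g}_2$ and $\mathfrak{g}_5$ fail to survive --- is the same as the paper's, but the step you yourself flag as ``the main obstacle'' is precisely where the content of the proof lives, and your sketch of it contains a degree error. The paper's entire argument rests on one explicit cobar computation in $N_0^2=BP_*/(p^\infty,v_1^\infty)$: writing down a cochain whose coboundary is $\frac{v_2^{jp}}{pv_1^{p-1}}b_{10}$ and applying the connecting homomorphisms $\delta\delta$ yields the $SDSS$ differential
\begin{equation*}
d_2(h_{20}\beta_{jp/i})=\beta_1\beta_{jp/i-1},\qquad 2\leqslant i\leqslant p.
\end{equation*}
This single formula does both jobs at once: multiplying by $\beta_1^{p^2-p}$ shows $\mathfrak{g}_2=\beta_1^{p^2-p}h_{20}\beta_{p/p}$ supports a nonzero $d_2$, and taking $j=\frac{p+1}{2}$, $i=p-m+1$ shows $\mathfrak{g}_5$ is the $d_2$-image of $\alpha_1\beta_1^{mp-\frac{p-1}{2}-1}b_{11}^{\frac{p-1}{2}-m}h_{20}\beta_{(\frac{p+1}{2})p/p-m+1}$. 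You do not supply this computation or any substitute for it, so the two non-survivals are asserted rather than proved.

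Moreover, the source you propose for the differential hitting $\mathfrak{g}_5$ cannot work. You suggest $\mathfrak{g}_5$ is a boundary supported by $\beta_1^j\,b_{11}^{\frac{p-1}{2}-m}\beta_{(\frac{p+1}{2})p/p-m}$ for a suitable $j$. Every $SDSS$ differential $d_r\colon E_r^{s,t,u}\to E_r^{s-r+1,t,u+r}$ lowers the total degree $t-s-u$ by exactly $1$, while the gap in total degree between $\beta_1^j w$ and $\alpha_1\beta_1^{mp-\frac{p-1}{2}}w$ (for the same ABC generator $w$) is $(j-mp+\frac{p-1}{2})(qp-2)-(q-1)$, which is never $1$ since $qp-2>q$. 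So no power of $\beta_1$ times that same $C$-generator can map to $\mathfrak{g}_5$; the source must be a \emph{different} ABC generator, namely the one involving $h_{20}\beta_{(\frac{p+1}{2})p/p-m+1}$ with the index shifted by one, exactly as produced by the formula above. To complete the proof you would need to (i) carry out the $N_0^2$ cobar calculation (or otherwise establish $d_2(h_{20}\beta_{jp/i})=\beta_1\beta_{jp/i-1}$), (ii) apply it with the correct sources and targets, and (iii) check, as the paper does via its chart, that the remaining six classes admit no possible differentials for filtration and degree reasons.
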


\begin{proof}
Following D.~Ravenel \cite{ravenelbook1} page 287, we compute in the cobar complex of $N_0^2=BP_*/(p^\infty, v_1^\infty)$
\begin{align*}
d\left(\frac{v_2^{jp}}{pv_1^p}(t_2-t_1^{p+1})\right) = & \frac{v_2^{jp}}{pv_1^p}t_1^p\otimes t_1 +\frac{v_2^{jp}}{pv_1^{p-1}}b_{10},\\
-d\left(\frac{v_2^{jp+1}}{pv_1^{p+1}}t_1\right)= & -\frac{v_2^{jp}}{pv_1^p}t_1^p\otimes t_1 -j\frac{v_2^{(j-1)p+1}}{pv_1}t_1^{p^2}\otimes t_1
 +\frac{v_2^{jp}}{pv_1}t_1\otimes t_1, \\
d\left(j\frac{v_2^{(j-1)p}v_3}{pv_1}t_1\right) = & j\frac{v_2^{(j-1)p+1}}{pv_1}t_1^{p^2}\otimes t_1 -j\frac{v_2^{jp}}{pv_1}t_1\otimes t_1, \\
-(j-1)/2d\left(\frac{v_2^{jp}}{pv_1}t_1^2\right)= & (j-1)\frac{v_2^{jp}}{pv_1}t_1\otimes t_1.
\end{align*}
Straightforward calculation shows that the coboundary of
\[
\frac{v_2^{jp}}{pv_1^p}t_2-\frac{v_2^{jp}}{pv_1^p}t_1^{p+1}-\frac{v_2^{jp+1}}{pv_1^{p+1}}t_1+
j\frac{v_2^{(j-1)p}v_3}{pv_1}t_1-(j-1)/2\frac{v_2^{jp}}{pv_1}t_1^2
\]
is $\displaystyle{\frac{v_2^{jp}}{pv_1^{p-1}}b_{10}}$. Then from $\delta\delta\displaystyle{\left(\frac{v_2^{jp}}{pv_1^p}\right)}=\beta_{jp/p}$,
we get a differential in the $SDSS$
\begin{align*}
d_2(h_{20}\beta_{jp/p})=\beta_1\beta_{jp/p-1}.
\end{align*}
Similarly, we have
\begin{align}\tag{3.1}
d_2(h_{20}\beta_{jp/i})= & \beta_1\beta_{jp/i-1} & & \mbox{for $2\leqslant i\leqslant p$.}
\end{align}
Applying formula (3.1), we get the following differentials in the $SDSS$
\begin{align*}
d_2(\mathfrak{g}_2) = d_2(\beta_1^{p^2-p}h_{20}\beta_{p/p}) = & \beta_1^{p^2-p+1}\beta_{p/p-1}, \\
d_2(\alpha_1\beta_1^{mp-\frac{p-1}{2}-1}b_{11}^{\frac{p-1}{2}-m}h_{20}\beta_{(\frac{p+1}{2})p/p-m+1}) = &
\alpha_1\beta_1^{mp-\frac{p-1}{2}}b_{11}^{\frac{p-1}{2}-m}\beta_{(\frac{p+1}{2})p/p-m}= \mathfrak{g}_5
\end{align*}
{\small
\xymatrix@R=0pt{
&&s+u&&&& \\
&&&&&&\\
&&&&\mathfrak{g}_1\bullet&&\\
&&&\circ&&&\\
&&&&\mathfrak{g}_2\bullet\ar[ul]_{d_{2}}&&\\
&&&&&&\\
&&&&\mathfrak{g}_3\bullet&&\\
&&&&&&\\
&&&&\mathfrak{g}_4\bullet&&\\
&&&&&&\\
&&&&\mathfrak{g}_5\bullet&&\\
&&&&&\circ\ar[ul]_{d_{2}}&\\
&&&&&&\\
&&&&\mathfrak{g}_6\bullet&&\\
&&&&&&\\
&&3\ \! -&&\quad\mathfrak{g}_7\bullet\!\!\bullet \mathfrak{g}_8 & _{\beta_{p^2/p^2-1}}&\\
&&\ar[uuuuuuuuuuuuuuuu]\ar[rrrr]& \shortmid&\quad \shortmid& \bullet & _{t-s-u}\\
&&0&q(p^3+1)-4&\quad q(p^3+1)-3&q(p^3+1)-2&}
}

The theorem follows.
\end{proof}

\section{A differential in the $ANSS$}

  This section is armed at showing that
\begin{align}
d_{2p-1}(h_{20}b_{11}\gamma_{s})=\alpha_1\beta_1^{p}h_{20}\gamma_{s}
\end{align}
in the $ANSS$, which will be used in proving Theorem A in section 5.

  We begin from showing that $\pi_{q(p^2+2p+2)-2}V(2)=0$.
From which we show that the Toda bracket $\langle \alpha_1\beta_1, p, \gamma_s \rangle=0$ and
the Toda bracket $\langle\alpha_1\beta_1^{p-1}, \alpha_1\beta_1, p, \gamma_s \rangle$
is well defined. Then from the relation
\[
\langle\alpha_1\beta_1^{p-1}, \alpha_1\beta_1, p, \gamma_s \rangle=\alpha_1\beta_1^{p-1}h_{20}\gamma_s  =\beta_{p/p-1}\gamma_s
\]
in $\pi_*S^0$ and $d(h_{20}b_{11})=\beta_1\beta_{p/p-1}$, we get the desired differential in the $ANSS$.

    Let $p\geqslant 5$ be an odd prime and $V(2)$ be the Smith-Toda spectrum characterized by
\[
BP_*V(2)=BP_*/I_3
\]
where $I_3$ is the invariant ideal of $BP_*=\mathbb{Z}_{(p)}[v_1, v_2, \cdots, v_i, \cdots]$ generated by
$ p, v_1$ and $v_2 $. To compute the homotopy groups of $V(2)$, one has the $ANSS$
$\{E_r^{s,t}V(2), d_r\}$ that converges to $\pi_*V(2)$. The $E_2$-term of this spectral sequence
is
\[
E_2^{s,t}V(2)=Ext_{BP_*BP}^{s,t}(BP_*, BP_*V(2))
\]

    Let
\[
\Gamma=BP_*/I_3\otimes_{BP_*}BP_*BP\otimes_{BP_*}BP_*/I_3=BP_*/I_3[t_1, t_2, \cdots].
\]
Then $(BP_*/I_3, \Gamma)$ is a Hopf algebroid, and its structure map is deduced from that of
$(BP_*, BP_*BP)$.  By a change of ring theorem, one sees that
\[
Ext_{BP_*BP}^{s,t}(BP_*, BP_*V(2))=Ext_{\Gamma}^{s,t}(BP_*, BP_*/I_3) \Longrightarrow \pi_*V(2)
\]

\begin{lemma}\label{theorem0} The $q(p^2+2p+2)-2$ dimensional stable homology group of $V(2)$ is trivial, i.e.,
$$\pi_{q(p^2+2p+2)-2}V(2)=0.$$
\end{lemma}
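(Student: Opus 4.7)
The approach is to establish vanishing already at the Adams--Novikov $E_2$-term. Since the ANSS for $V(2)$ has $E_2^{s,t} = Ext_\Gamma^{s,t}(BP_*, BP_*/I_3)$ and converges to $\pi_*V(2)$, it suffices to show that $Ext_\Gamma^{s,s+N}(BP_*, BP_*/I_3)=0$ for every $s\geqslant 0$, where $N=q(p^2+2p+2)-2$.

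The first step is to record the useful decomposition
\[
N = q(p^2+p+1) + q(p+1) - 2 = |v_3|+|v_2|-2.
\]
This pins down what sort of class can possibly live in this stem: since $v_3$ is primitive mod $I_3$ and $Ext_\Gamma^{0,*}=\mathbb{F}_p[v_3]$, any contribution must be built from a $v_3^k$ together with a low-filtration generator of $Ext_\Gamma(BP_*,BP_*/I_3)$ in the sense of Miller--Ravenel--Wilson: an $h_{i,j}=[t_i^{p^j}]$, a $b_{i,j}$, a $\gamma$-element, or a product of these, modulo the relations forced by the cobar differential.

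The second step is an exhaustive case analysis. For each filtration $s$, I would enumerate the monomials $v_3^k\cdot x$ whose bidegree is $(s,s+N)$, and verify that every candidate is ruled out either by a dimensional mismatch (parity, divisibility by $q$, etc.) or by a relation in the cobar complex mod $I_3$. The key relations are those induced from $\eta_R(v_i)\equiv v_i\ (\text{mod}\ I_3)$ for $i\leqslant 2$ and from the reduced coproducts
\[
d(t_2)\equiv t_1\otimes t_1^p,\qquad d(t_3)\equiv t_1\otimes t_2^p + t_2\otimes t_1^{p^2}\pmod{I_3},
\]
which in particular force $h_{1,0}h_{1,1}=0$ in $Ext^2$. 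This type of relation is what kills the natural $Ext^2$ candidate $v_3\cdot h_{1,0}h_{1,1}$ sitting exactly in bidegree $(2,q(p^2+2p+2))$.

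Having checked that $E_2^{s,s+N}V(2)=0$ for all $s$, the lemma follows. The main obstacle is precisely the elimination step: while parity and the filtration bound $s\leqslant 2p$ (which holds in this range of total degree) quickly reduce the list, the candidates involving $\gamma_s$ and products of $b_{i,j}$'s with powers of $v_3$ require careful dimensional accounting, and each survivor must be shown to vanish via a cobar relation. If some class were to slip through the $E_2$-level, the fallback would be to show it is hit by a differential, compared back via the cofibre sequence $\Sigma^{|v_2|}V(1)\xrightarrow{v_2}V(1)\to V(2)$ with differentials already established in $\pi_*V(1)$.
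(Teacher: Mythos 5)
Your overall strategy coincides with the paper's: both reduce the lemma to the vanishing of the Adams--Novikov $E_2$-term $Ext_\Gamma^{s,s+N}(BP_*,BP_*/I_3)$ for $N=q(p^2+2p+2)-2$, and both exploit the splitting $Ext^0=\mathbb{F}_p[v_3]$ so that every candidate has the form $v_3^k\cdot x$ (degree reasons in fact force $k\in\{0,1\}$). Your one concrete check is also consistent with the paper: $h_{1,0}h_{1,1}$ dies because it is the May $d_1$-boundary of $h_{2,0}$, so $v_3h_{1,0}h_{1,1}$ is not a surviving class.

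The gap is that the decisive step --- the ``exhaustive case analysis'' --- is announced but not carried out, and as you describe it (enumerate all monomials in $h_{i,j}$'s, $b_{i,j}$'s and ``$\gamma$-elements'', then kill each survivor by an ad hoc cobar relation) it is not yet a finite, organized procedure. Two things are missing. First, you need the sparseness observation that in total degree $\leqslant q(p^3+p^2+p+1)-1$ one may replace $\Gamma$ by $\Gamma'=\mathbb{Z}/p[v_3][t_1,t_2,t_3]$, which is what makes the generating set finite; note also that $\gamma_s$ is not a generator of $Ext_\Gamma(BP_*,BP_*/I_3)$ (the zero line is just $\mathbb{F}_p[v_3]$; the $\gamma$'s only appear after applying connecting homomorphisms to the sphere), so they should not enter the enumeration at all. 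Second, rather than checking cobar relations case by case, the paper runs the May spectral sequence for $\mathbb{Z}/p[t_1,t_2,t_3]$ and quotes Toda's computation of $H^*\bigl(E[h_{ij}\mid i+j\leqslant 3],\delta_1\bigr)$, which leaves only the short list $1,\ h_{20}h_{10},\ h_{20}h_{11},\ h_{12}h_{10},\ h_{21}h_{11}$ of even-dimensional classes to be tensored with $E[b_{20},b_{11}]\otimes P[b_{10}]$; the elimination is then a single congruence check of total degrees modulo $qp-2$ (none equals $6p-2$, and the $v_3x$ case with $t-s=q(p+1)-2$ is likewise empty). Without some such organizing device your plan does not yet constitute a proof; with it, your argument becomes exactly the paper's. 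The fallback via the cofibre sequence $\Sigma^{|v_2|}V(1)\to V(1)\to V(2)$ is not needed here because everything already vanishes at $E_2$.
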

\begin{proof}
    Fix $t-s=q(p^2+2p+2)-2$, we know that the Adams-Novikov $E_2$-term
\[
Ext_{BP_*BP}^{s,s+q(p^2+2p+2)-2}(BP_*, BP_*V(2))=Ext_\Gamma^{s,s+q(p^2+2p+2)-2}(BP_*, BP_*/I_3)
\]
converges to $\pi_{q(p^2+2p+2)-2}V(2)$. We will prove that $\pi_{q(p^2+2p+2)-2}V(2)=0$ by showing that
$Ext_{BP_*BP}^{s,s+q(p^2+2p+2)-2}(BP_*, BP_*V(2))=0$.

  In the cobar complex $C^s_\Gamma BP_*/I_3$, the inner degree of $v_i$, $|v_i|=|t_i| \geqslant q(p^3+p^2+p+1)$
for $i \geqslant 4$. It follows that in the range $t-s\leqslant q(p^3+p^2+p+1)-1$,
\[
Ext_{BP_*BP}^{s,t}(BP_*, BP_*/I_3)=Ext_\Gamma^{s,t}(BP_*, BP_*/I_3)=Ext_{\Gamma'}^{s,t}(BP_*, BP_*/I_3).
\]
where $\Gamma' = \mathbb{Z}/p[v_3][t_1, t_2, t_3]$. From $\eta_R(v_3)\equiv v_3$ $mod$ $I_3$, we see that
\[
Ext_{\mathbb{Z}/p[v_3][t_1, t_2, t_3]}^{s,*}(BP_*, BP_*/I_3)\cong
Ext_{\mathbb{Z}/p[t_1, t_2, t_3]}^{s,*}(\mathbb{Z}/p, \mathbb{Z}/p)\otimes \mathbb{Z}/p[v_3].
\]
To compute the $Ext$ groups
$Ext^*_{\mathbb{Z}/p[t_1, t_2,t_3]}(\mathbb{Z}/p, \mathbb{Z}/p)$, we can use
the modified May spectral sequence ($MSS$) introduced in \cite{ May64, May66, ravenelbook}.

    There is the May spectral sequence  $\{E_r^{s,t,\ast},\delta_r\}$ that converges to
$Ext^{s,t}_{\mathbb{Z}/p[t_1, t_2, t_3]}(\mathbb{Z}/p, \mathbb{Z}/p)$. The $E_1$-term of
this spectral sequence is
\begin{equation}\label{may}
E_1^{\ast,\ast,\ast}= E[h_{ij}|0\leqslant j, i=1,2,3]\otimes P[b_{ij}|0\leqslant j, i=1,2,3],
\end{equation}
where
\begin{align*}
h_{ij}\in & E_1^{1,q(1+p+\cdots+p^{i-1})p^j,2i-1} & &\mbox{and} & b_{ij}\in & E_1^{2,q(1+p+\cdots+p^{i-1})p^{j+1}, p(2i-1)}.
\end{align*}
The first May differential is given by
\begin{equation}\label{d}
\delta_1(h_{i,j})= \sum\limits_{0<k<i}^{}h_{i-k,k+j}h_{k,j}
\hskip .5cm \mbox{and} \hskip .5cm \delta_1(b_{i,j})=0.
\end{equation}

  For the reason of the total degree, to compute $Ext_{BP_*BP}^{s,s+(q(p^2+2p+2)-2)}(BP_*, BP_*/I_3)$
we only need to consider the sub-module generated by
$h_{30}, h_{20}, h_{10}, h_{21}, h_{11}, h_{12}$ and $b_{20}, b_{10}, b_{11}$, i.~e. the subcomplex
\[
E[h_{ij}|1\leqslant i, i+j\leqslant 3]\otimes E[b_{20}, b_{11}]\otimes P[b_{10}].
\]

   From (4.3), we know that within $t-s\leqslant q(p^2+2p+2)-2$ the May's $E_2$-term
\[
E_2^{s,*,*}=H^{s,*, *}(E_1^{s,*,*}, \delta_1)=H^{*,*,*}(E[h_{ij}|0\leqslant j, i+j\leqslant 3], \delta_1)
\otimes E[b_{20}, b_{11}]\otimes P[b_{10}].
\]
H.~Toda in \cite{Toda} computed the cohomology of $\left( E[h_{ij}|0\leqslant j, i+j\leqslant 3], \delta_1\right)$.
Here we only jot down the even dimensional elements within that range.

\[
\begin{array}{llllllllll}
h_{20}h_{10}, & q(p+2)-2; & & h_{20}h_{11}, & q(2p+1)-2; \\
h_{12}h_{10}, & q(p^2+1)-2; & & h_{21}h_{11}, & q(p^2+2p)-2.
\end{array}
\]
Thus within $t-s\leqslant q(p^2+2p+2)-2$, the even dimensional May's $E_2-$term $E_2^{s,t,*}$ is a submodule of
\[
\mathbb{Z}/p\{1, h_{20}h_{10}, h_{20}h_{11}, h_{12}h_{10}, h_{21}h_{11}\}\otimes E[b_{20}, b_{11}]\otimes P[b_{10}].
\]

  Suppose we have a generator $y$ in $Ext_{\mathbb{Z}/p[v_3][t_1, t_2, t_3]}^{s,s+q(p^2+2p+2)-2}(BP_*, BP_*/I_3)$.
Then $y$ is the form of $x$ or $v_3x$ where $x$ is an even dimensional generator in
$H^*(E[h_{ij}|i+j\leqslant 3])\otimes E[b_{20}, b_{11}]\otimes P[b_{10}]$.
\begin{enumerate}
\item If $y=v_3x$, then $x\in E_2^{s,t,*}$ subject to $t-s=q(p+1)-2$. An easy computation shows that the corresponding
$E_2$-term is zero.
\item If $y=x$, then $x\in E_2^{s,t,*}$ subject to $t-s=q(p^2+2p+2)-2$. Similarly, from
\begin{align*}
q(p^2+2p+2)-2\equiv &  6p-2 & & \hskip .5cm mod \hskip .3cm qp-2
\end{align*}
we compute that the total degree $t-s$ mod $qp-2$ of the generators in
\[
\mathbb{Z}/p\{1, h_{20}h_{10}, h_{20}h_{11}, h_{12}h_{10}, h_{21}h_{11}\}\otimes[b_{20}, b_{11}]
\]
and find non of them is $6p-2$. Thus the corresponding $E_2$-term is zero.
\end{enumerate}
The Lemma follows.
\end{proof}

It is easily showed that the following theorem holds from the lemma above.
\begin{theorem}\label{todabracket}For $p\geqslant 7$, $s\geqslant 1$, the
Toda bracket $\langle \alpha_1\beta_1, p,   \gamma_{s}\rangle =0$.
\end{theorem}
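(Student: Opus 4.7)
The plan is to show the Toda bracket vanishes by realizing a representative as an element of $\pi_*V(2)$ in a dimension where this group is zero by Lemma~\ref{theorem0}. The bracket is well-defined because $\alpha_1\beta_1$ is $p$-torsion (so $p\alpha_1\beta_1=0$) and $\gamma_s$ is $p$-torsion (so $p\gamma_s=0$).

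The first step I would take is to exploit the fact that $\gamma_s$ comes from $V(2)$ by construction: it equals the composite
\[
S^{s|v_3|} \xrightarrow{v_3^s\circ i_2} V(2) \xrightarrow{\partial_2\partial_1\partial_0} \Sigma^3 S^0,
\]
where $i_2\colon S^0 \to V(2)$ is the bottom-cell inclusion, $v_3$ is the Smith--Toda self-map, and $\partial_k\colon V(k)\to \Sigma V(k-1)$ are the connecting maps of the standard Smith--Toda cofiber sequences. Since $p\alpha_1\beta_1=0$, I would next choose an extension $\overline{\alpha_1\beta_1}\colon \Sigma^{|\alpha_1\beta_1|}V(0)\to S^0$. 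The standard Toda-bracket-with-boundary identity then yields
\[
\langle \alpha_1\beta_1, p, \gamma_s\rangle \ni \overline{\alpha_1\beta_1}\circ \partial_1\partial_2(v_3^s\, i_2),
\]
up to sign and the usual indeterminacy.

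Because $V(2)$ is a $V(0)$-module spectrum at $p\geq 5$, this representative can be detected by a class $X_s\in \pi_{|\alpha_1\beta_1|+s|v_3|+1}V(2)$. A direct dimension count gives $|\alpha_1\beta_1|+|v_3|+1 = q(p^2+2p+2)-2$ for $s=1$, so $X_1$ lies in the group killed by Lemma~\ref{theorem0} and is therefore zero; for $s\geq 2$, the $V(2)$-module structure yields $X_s=v_3^{s-1}\cdot X_1=0$. Thus the bracket contains zero in every case. The indeterminacy $\alpha_1\beta_1\cdot \pi_{|\gamma_s|+1}S^0+\pi_{|\alpha_1\beta_1|+1}S^0\cdot \gamma_s$ can then be ruled out from the low-dimensional $Ext$ computations of section~2.

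The hardest part will be rigorously constructing the detector $X_s\in\pi_*V(2)$ and verifying the multiplicative relation $X_s=v_3^{s-1}X_1$. This requires carefully tracking $\overline{\alpha_1\beta_1}\circ\partial_1\partial_2(v_3^s i_2)$ through the defining cofiber sequences $V(0)\to V(1)\to V(2)$ and checking compatibility with the $V(0)$-module structure on $V(2)$; once this is in place, the vanishing of the bracket follows immediately from Lemma~\ref{theorem0}.
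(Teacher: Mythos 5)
Your proposal is correct and follows essentially the same route as the paper: factor $\gamma_s$ through $V(2)$ via the bottom-cell inclusion, the self-map $v_3^s$, and the top-cell projection, then transfer the bracket into $\pi_{q(p^2+2p+2)-2}V(2)$, which vanishes by Lemma~\ref{theorem0}. The "detector" $X_s$ you worry about constructing is exactly the bracket $\langle\alpha_1\beta_1,\, p,\, v_3\circ i_2\rangle$ formed in the $\pi_*S^0$-module $\pi_*V(2)$, after which the standard juggling formula $\langle a,b,g\circ c\rangle\supseteq g_*\langle a,b,c\rangle$ (applied with $g=\widetilde{j}\circ v_3^{s-1}$) finishes the argument, so the step you flag as hardest is routine.
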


\begin{proof}
Let $\widetilde{v}_3$  be the composition of the following maps
\[
\xymatrix{S^{q(p^2+p+1)}\ar[r]^-{\widetilde{i}}&\Sigma^{q(p^2+p+1)}V(2)\ar[r]^-{v_3}& V(2),}
\]
where the first map is inclusion to the  bottom cell.

It is known that $\widetilde{v}_3$ is an order $p$ element in $\pi_{q(p^2+p+1)}V(2)$. Thus the Toda bracket
 $\langle \alpha_1\beta_1, p,  \widetilde{v}_3\rangle $ is well defined and
 $\langle \alpha_1\beta_1, p,  \widetilde{v}_3\rangle \in \pi_{q(p^2+2p+2)-2}V(2)=0$.
It follows that the Toda bracket  $\langle \alpha_1\beta_1, p,  \widetilde{v}_3\rangle=0.$

Let $\widetilde{j}:V(2)\longrightarrow S^{q(p+2)+3}$ be the collapsing lower cells map
from $V(2)$, then $\gamma_{s}  = \widetilde{v}_3\cdot v_3^{s-1} \cdot \widetilde{j}$.
As a result,
$$ \langle \alpha_1\beta_1, p,  \gamma_{s} \rangle =  \langle \alpha_1\beta_1, p,  \widetilde{v}_3\cdot v_3^{s-1}
\cdot \widetilde{j} \rangle =\langle \alpha_1\beta_1, p,  \widetilde{v}_3\rangle \cdot v_3^{s-1}
\cdot \widetilde{j}=0$$
because $\langle \alpha_1\beta_1, p,  \widetilde{v}_3\rangle =0 \in \pi_{q(p^2+2p+2)-2}V(2)=0.$
\end{proof}

 \begin{proposition} (see also \cite{ravenelbook1} 7.5.11)
Let $p\geqslant 7$ be an odd prime. Then in $\pi_*S^0$, the Toda bracket
$\langle \alpha_1\beta_1^{p-1}, \alpha_1\beta_1, p, \gamma_s\rangle$ is well defined and
\[
\alpha_1\beta_1^{p-1}h_{20}\gamma_s =\langle \alpha_1\beta_1^{p-1}, \alpha_1\beta_1, p, \gamma_s\rangle
= \beta_{p/p-1} \gamma_s.
\]
\end{proposition}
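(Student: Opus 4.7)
The plan is to prove the proposition in three stages: verifying the 4-fold Toda bracket is defined, evaluating it via juggling and lifting to the $ANSS$ $E_2$-page, and identifying the result with $\beta_{p/p-1}\gamma_s$.

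For well-definedness, I first check that the three consecutive pairwise products vanish: $\alpha_1\beta_1^{p-1}\cdot\alpha_1\beta_1 = \alpha_1^2\beta_1^p = 0$ (using $\alpha_1^2=0$), $\alpha_1\beta_1\cdot p = 0$ (using $p\alpha_1 = 0$), and $p\cdot\gamma_s = 0$ (since $\gamma_s$ has order $p$). The two 3-fold sub-brackets must contain zero: the right one, $\langle\alpha_1\beta_1, p, \gamma_s\rangle = 0$, is exactly Theorem 4.2; the left one, $\langle\alpha_1\beta_1^{p-1}, \alpha_1\beta_1, p\rangle$, is defined and contains zero by Toda's classical relation $\alpha_1\beta_1^p = 0$, which supplies the required null-homotopy of the middle product through the outer factors.

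Next I evaluate the bracket via juggling. The indeterminacy of the 4-fold bracket is contained in $\alpha_1\beta_1^{p-1}\cdot\langle\alpha_1\beta_1, p, \gamma_s\rangle + \langle\alpha_1\beta_1^{p-1}, \alpha_1\beta_1, p\rangle\cdot\gamma_s$, whose first summand vanishes by Theorem 4.2. To extract a specific representative, I lift to $Ext^*_{BP_*BP}(BP_*, BP_*)$ and compute the corresponding 4-fold Massey product in the cobar complex of $BP_*BP$. The key cocycle-level identity is that $h_{20}=[t_1^p]$ represents $\langle\alpha_1, p, \alpha_1\rangle$, which follows from $\eta_R(v_2)\equiv v_2+v_1 t_1^p - v_1^p t_1 \pmod{p}$; juggling this identity through the 4-fold bracket yields the representative $\alpha_1\beta_1^{p-1}h_{20}\gamma_s$. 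The final identification $\alpha_1\beta_1^{p-1}h_{20} = \beta_{p/p-1}$ in $Ext^3_{BP_*BP}(BP_*, BP_*)$ comes from the chromatic expression $\beta_{p/p-1} = \delta\delta(v_2^p/(pv_1^{p-1}))$ in $N_0^2 = BP_*/(p^\infty, v_1^\infty)$ combined with a cobar computation in the spirit of those performed in the proof of Theorem 3.2.

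The main obstacle will be the bookkeeping of the 4-fold bracket's indeterminacy: I must argue that, despite the freedom in choosing null-homotopies, the bracket yields a single homotopy class equal to $\beta_{p/p-1}\gamma_s$. Concretely, this requires showing that the residual indeterminacy $\langle\alpha_1\beta_1^{p-1}, \alpha_1\beta_1, p\rangle\cdot\gamma_s$ collapses to a single element, which I would address by a dimension count in the Adams--Novikov $E_2$-term at the total degree of $\beta_{p/p-1}\gamma_s$, drawing on the $SDSS$ framework of Section 2 to rule out higher-filtration corrections.
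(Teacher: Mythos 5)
Your setup for well-definedness is reasonable in outline (the vanishing of the adjacent products and of $\langle\alpha_1\beta_1, p, \gamma_s\rangle$ from Theorem 4.2 is exactly what the paper uses), but your evaluation step contains a genuine error that would sink the argument. You propose to establish the identity $\alpha_1\beta_1^{p-1}h_{20} = \beta_{p/p-1}$ by lifting to $Ext^*_{BP_*BP}(BP_*,BP_*)$ and computing a Massey product in the cobar complex, placing the result in $Ext^3$. This cannot work: $\alpha_1\beta_1^{p-1}h_{20}$ lies in cohomological degree $1+2(p-1)+1=2p$, while $\beta_{p/p-1}$ lies in degree $2$, so they are not even in the same $Ext$ group and no cobar computation can identify them. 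The content of the proposition is precisely that these two products of \emph{different} Adams--Novikov filtrations represent the same class in $\pi_*S^0$ --- a hidden extension --- and this is why the paper works entirely with Toda brackets in homotopy rather than Massey products in $Ext$. (Indeed, the very next theorem exploits the fact that $\beta_{p/p-1}\gamma_s=0$ in $Ext^5$ while the homotopy class is detected in higher filtration by $\alpha_1\beta_1^{p-1}h_{20}\gamma_s$; if your $Ext$-level identification were meaningful, that argument would collapse.) A secondary error: $h_{20}$ is not $[t_1^p]$ (that cocycle represents $h_{11}$); $h_{20}$ has internal degree $q(p+1)$ and is carried by $t_2$, and it does not arise as $\langle\alpha_1,p,\alpha_1\rangle$.

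The paper's actual route, which you would need to adopt, is pure bracket juggling: it takes as inputs the two known four-fold bracket identities $\beta_{p/p-1}=\langle\beta_1^{p-1},\alpha_1\beta_1,p,\alpha_1\rangle$ (Ravenel, 7.5.11) and $\alpha_1h_{20}\gamma_s=\langle\alpha_1,\alpha_1\beta_1,p,\gamma_s\rangle$, whose definedness rests on $\langle\beta_1^{p-1},\alpha_1\beta_1,p\rangle=0$, $\langle\alpha_1\beta_1,p,\alpha_1\rangle=0$, $\langle\alpha_1,\alpha_1\beta_1,p\rangle=0$ and $\langle\alpha_1\beta_1,p,\gamma_s\rangle=0$. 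Multiplying the second identity by $\beta_1^{p-1}$, shuffling the outer factors into and out of the bracket, and then applying the first identity gives $\beta_1^{p-1}\alpha_1h_{20}\gamma_s=\langle\beta_1^{p-1},\alpha_1\beta_1,p,\alpha_1\rangle\gamma_s=\beta_{p/p-1}\gamma_s$, all in $\pi_*S^0$. Your concern about indeterminacy is legitimate, but it is handled by these shuffling moves rather than by a dimension count; what your proposal is missing is any mechanism for comparing classes across filtrations, which is the heart of the statement.
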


\begin{proof}
    From $\langle \beta_1^{p-1}, \alpha_1\beta_1, p \rangle=0$, $\langle \alpha_1\beta_1, p, \alpha_1\rangle=0$,
$\langle \alpha_1, \alpha_1\beta_1, p\rangle=0$ and $\langle \alpha_1\beta_1, p, \gamma_s\rangle=0$,
we know that the following 4-fold Toda bracket is well defined and
\[
\beta_{p/p-1} = \langle \beta_1^{p-1}, \alpha_1\beta_1, p,  \alpha_1\rangle;
\quad  \quad \alpha_1h_{20}\gamma_s=\langle \alpha_1, \alpha_1\beta_1, p,  \gamma_s\rangle.
 \]
On the other hand, one has
 \begin{eqnarray*}
  \beta_{1}^{p-1}\alpha_1h_{20}\gamma_s &=& \beta_{1}^{p-1}\langle \alpha_1, \alpha_1\beta_1, p,  \gamma_s\rangle \\
     &=& \langle \alpha_1 \beta_{1}^{p-1}, \alpha_1 \beta_1, p,  \gamma_s\rangle \\
     &=& \alpha_1 \langle \beta_{1}^{p-1}, \alpha_1 \beta_1, p,  \gamma_s\rangle \\
     &=& \langle  \beta_{1}^{p-1}, \alpha_1\beta_1, p, \alpha_1  \gamma_s\rangle \\
     &=& \langle  \beta_{1}^{p-1}, \alpha_1\beta_1, p,  \alpha_1\rangle \cdot \gamma_s \\
     &=& \beta_{p/p-1} \gamma_s
 \end{eqnarray*}
The proposition follows.
\end{proof}

\begin{theorem}\label{adifferential} Let $p\geqslant 7$ be an odd prime and $2 \leqslant s \leqslant p-2$.
Then in the $ANSS$, we have the following Adams-Novikov differential
\[
d_{2p-1}(h_{2,0}b_{1,1}\gamma_{s})=\alpha_1\beta_1^{p}h_{2,0}\gamma_{s}.
\]
\end{theorem}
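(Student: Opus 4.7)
The strategy is to combine the Toda bracket identity from Proposition 4.3 with a chain-level computation in the cobar complex of $BP_*BP$, analogous to the computation in Section 3 that yielded $d_2(h_{20}\beta_{jp/i}) = \beta_1\beta_{jp/i-1}$ in the SDSS. First, multiplying the homotopy identity of Proposition 4.3 by $\beta_1$ yields
$$\alpha_1\beta_1^p h_{20}\gamma_s \; = \; \beta_1\beta_{p/p-1}\gamma_s \quad \text{in } \pi_*S^0.$$
The left-hand side is represented in ANSS filtration $2p+5$, while the right-hand side has a representative in filtration $7$; since the two homotopy classes coincide but their $E_2$-representatives sit at different filtrations, an associated-graded argument forces $\alpha_1\beta_1^p h_{20}\gamma_s$ to be the target of a nontrivial differential at the $E_{2p-1}$-page, whose source must have filtration $6=(2p+5)-(2p-1)$; by degree the natural candidate source is $h_{20}b_{11}\gamma_s$.

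Second, to pin down the differential unambiguously, I would carry out a cochain-level computation in the cobar complex. Using standard cocycle representatives for $h_{20}$ and for $b_{11}$ (the latter being $\sum_{0<i<p}\tfrac{1}{p}\binom{p}{i}[t_1^{ip}\,|\,t_1^{(p-i)p}]$), together with a chosen representative of $\gamma_s$, one computes the coboundary of the cochain $h_{20}\cdot b_{11}\cdot \gamma_s$ modulo the appropriate higher-filtration subcomplex. In the spirit of the Section~3 computation, this should establish the chain-level identity $\partial(h_{20}b_{11}) \equiv \beta_1\beta_{p/p-1}$, which translates, via Moss's convergence theorem, into the ANSS differential
$$d_{2p-1}(h_{20}b_{11}\gamma_s) \; = \; \beta_1\beta_{p/p-1}\gamma_s.$$
Substituting the identity from the first step then yields the stated formula.

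The main obstacle is the rigorous cobar-level computation: one must choose the right cochain lift, carefully track every contribution from the Leibniz rule, and verify that no extraneous terms survive modulo higher filtration that would alter the target of the differential. A secondary technical point is the careful invocation of Moss's theorem under the assumption $2\leqslant s\leqslant p-2$, which controls the indeterminacy of the 4-fold Toda bracket and ensures that the chain-level relation genuinely promotes to a $d_{2p-1}$ differential in the ANSS rather than a differential of different length or a masked boundary.
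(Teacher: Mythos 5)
Your raw ingredients coincide with the paper's: the cobar-level relation $d(h_{20}b_{11})=\beta_1\beta_{p/p-1}$ extracted from the Section~3 computation, and Proposition~4.3 multiplied by $\beta_1$. But the logical glue has a genuine gap. The ``associated-graded argument'' in your first step is not valid: the fact that one homotopy class admits two product decompositions whose $E_2$-level representatives lie in filtrations $7$ and $2p+5$ does not force any differential --- it would be perfectly consistent for the filtration-$7$ product to vanish in $E_2$ while the filtration-$(2p+5)$ class survives and detects the common homotopy class. And the filtration-$7$ product \emph{does} vanish: $\beta_{p/p-1}\gamma_s=0$ in $Ext^{5,*}_{BP_*BP}(BP_*,BP_*)$, hence $\beta_1\beta_{p/p-1}\gamma_s=0$ in $Ext^{7,*}_{BP_*BP}(BP_*,BP_*)$. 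This vanishing, which you never invoke, is an essential ingredient of the paper's proof: it is what makes $h_{20}b_{11}\gamma_s$ a well-defined class in $Ext^{6,*}_{BP_*BP}(BP_*,BP_*)$ at all (otherwise it would already be killed in the $SDSS$ by $d_2$), and it is what forces the Adams--Novikov differential to jump past filtration $7$.

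Relatedly, your intermediate conclusion $d_{2p-1}(h_{20}b_{11}\gamma_s)=\beta_1\beta_{p/p-1}\gamma_s$ does not typecheck: read in the $E_{2p-1}$-page the right-hand side sits in filtration $7$, so it cannot be the value of a $d_{2p-1}$ on a filtration-$6$ class (and it is zero there anyway); read in $\pi_*S^0$ it is not an element of any $E_r$-page. The mechanism the paper uses is: the cochain-level coboundary of a representative of $h_{20}b_{11}\gamma_s$ is a cocycle representing $\beta_1\beta_{p/p-1}\gamma_s$, which is a coboundary in the cobar complex because the $Ext$ class vanishes; the resulting correction is governed by the \emph{homotopy} class $\beta_{p/p-1}\gamma_s$, which by Proposition~4.3 is nonzero and detected by $\alpha_1\beta_1^{p-1}h_{20}\gamma_s$ in filtration $2p+3$; multiplying by $\beta_1$ places the target in filtration $2p+5$, and this is precisely what fixes the length $2p-1=(2p+5)-6$ of the differential. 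To repair your argument, record the vanishing of $\beta_{p/p-1}\gamma_s$ in $Ext^{5,*}$, replace the filtration-mismatch heuristic by this geometric-boundary/Moss-type step, and only then substitute the identity from Proposition~4.3.
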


\begin{proof}
Note that $b_{11}=\beta_{p/p}$. Then from (3.1) one has the differential in the small descent spectral sequence
($SDSS$)
\[
d_2(h_{20}b_{11})=\beta_1\beta_{p/p-1},
\]
which could be read as $d(h_{20}\beta_{p/p})=\beta_1\beta_{p/p-1}$ and
$d(h_{20}\beta_{p/p}\gamma_s)=\beta_1\beta_{p/p-1}\gamma_s$ in the cobar complex of $BP_*$ or equivalently
the first Adams-Novikov differential in the $ANSS$.
Then from the relation $\beta_{p/p-1}\gamma_s=\alpha_1\beta_1^{p-1}h_{20}\gamma_s$ in $\pi_*S^0$
and $\beta_{p/p-1}\gamma_s=0$ in $Ext^{5,*}_{BP_*BP}(BP_*, BP_*)$, we get the Adams differential
in the $ANSS$
\[
d_{2p-1}(h_{2,0}b_{1,1}\gamma_s)=\beta_1\cdot\beta_1^{p-1}\alpha_1h_{20}\gamma_s
=\alpha_1\beta_1^ph_{20}\gamma_s.
\]
The theorem follows.
\end{proof}

\section{The proof of Theorem A}

   In this section we  prove our main theorem by showing that
$\beta_{p^2/p^2-1}$ survives to $E_\infty$ in the $ANSS$. Note that $\beta_{p^2/p^2-1}$ has too
low dimension to be the target of an Adams-Novikov differential, we will do this by showing that all the Adams-Novikov differentials
$d_r(\beta_{p^2/p^2-1})$ are trivial.

\begin{lemma}
Let $p\geqslant 5$ and $i\not\equiv 0$ $mod$ $p$. In the $ANSS$, one has the following  Adams-Novikov differential
\[
d_{2p-1}(\eta_{i})=\beta_1^{p}\beta_{i+1}
\]
\end{lemma}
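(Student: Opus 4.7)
The plan is to mirror the structure of the proof of Theorem 4.4, combining a chain-level primary differential with a Toda bracket identity to promote it to the desired $d_{2p-1}$. The shape of the argument is imposed by the filtration arithmetic: $\eta_i$ sits in filtration $3$ and $\beta_1^p\beta_{i+1}$ in filtration $2p+2$, so $d_{2p-1}$ is the unique possible nontrivial differential, and one must simply verify nonvanishing.

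First, working in the cobar complex of $N_0^2=BP_*/(p^\infty,v_1^\infty)$ as in the proof of Theorem 3.2 (following Ravenel, p.~287), I would look for a cochain lifting $\eta_i=h_{11}u_i$ whose coboundary, after passing through the double connecting homomorphism $\delta\delta$, is $b_{10}$ times a representative of $\beta_{i+1}$ divided by $\beta_1$. This would yield an $SDSS$ first-order differential
\[
d_2(\eta_i)=\beta_1\,Y_i
\]
for an element $Y_i$ of filtration $3$, directly parallel to how $d_2(h_{20}\beta_{p/p})=\beta_1\beta_{p/p-1}$ was obtained in formula (3.1) from the coboundary of $v_2^{jp}/(pv_1^p)\cdot(t_2-t_1^{p+1})$ and its companions. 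The key ingredient is that $u_i$ is itself built as a double Bockstein of a $v$-fraction, so that multiplication by $h_{11}$ introduces an extra $b_{10}$-term on the chain level.

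Second, by the same Toda-bracket juggling as in Proposition 4.3, I would identify $Y_i$ (modulo higher filtration) with an expression of the form $\alpha_1\beta_1^{p-1}\cdot(\text{stuff representing }\beta_{i+1})$, or equivalently show that $\beta_1^{p-1}Y_i$ represents $\beta_{i+1}$ in $\pi_\ast S^0$ up to the Toda differential. Combining this with the first-order SDSS differential and invoking the Toda differential $d_{2p-1}(\beta_{p/p})=\alpha_1\beta_1^p$ gives, exactly as in the proof of Theorem 4.4, the desired $d_{2p-1}(\eta_i)=\beta_1\cdot\beta_1^{p-1}\cdot\alpha_1\cdot(\text{stuff})=\beta_1^p\beta_{i+1}$.

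The main obstacle will be the second step, namely pinning down $Y_i$ and verifying the Toda bracket identity in $\pi_\ast S^0$. The hypothesis $i\not\equiv 0\pmod p$ should enter precisely at the chain-level calculation of the first step: in the parallel Ravenel cobar manipulation (p.~287) explicit combinatorial factors like $j$ and $(j-1)/2$ appear in the balancing terms, and the analogous coefficient here will be a unit in $\mathbb{Z}/p$ exactly when $i\not\equiv 0\pmod p$. When $p\mid i$ the cochain witnessing $d_2(\eta_i)$ would have to be constructed differently (and in fact the conclusion of the lemma can fail), which is why the lemma is stated with this hypothesis.
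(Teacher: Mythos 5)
Your proposal does not follow the paper's route, and as it stands it has a genuine gap. The paper's proof does not establish an $SDSS$ differential on $\eta_i$ at all. Its key step is a \emph{product relation in the Adams--Novikov $E_2$-term}, namely $\alpha_1\eta_i=\beta_{i+1}\beta_{p/p}$, obtained by exhibiting an explicit cochain in the cobar complex of $N_0^2=BP_*/(p^\infty,v_1^\infty)$ whose coboundary identifies the class of (a representative of $\eta_i$)$\otimes t_1$ with $-\bigl[\tfrac{v_2^{i+1}}{pv_1}b_{11}\bigr]$, and then applying the double connecting homomorphism $\delta\delta$. Once this relation is in hand, the Leibniz rule together with the Toda differential $d_{2p-1}(\beta_{p/p})=\alpha_1\beta_1^p$ gives $\alpha_1 d_{2p-1}(\eta_i)=d_{2p-1}(\beta_{i+1}\beta_{p/p})=\alpha_1\beta_1^p\beta_{i+1}$, and one cancels $\alpha_1$. (The hypothesis $i\not\equiv 0\bmod p$ does enter at the chain level, as you guessed: the correcting terms carry $(p+i)$ in their denominators.) No new Toda bracket in $\pi_*S^0$ is needed.

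Your plan to mirror Theorem 4.4 runs into two concrete problems. First, a nonzero $SDSS$ differential $d_2(\eta_i)=\beta_1 Y_i$ would kill $\eta_i$ in the $SDSS$, contradicting the fact that $\eta_i$ survives to give a class in $Ext_{BP_*BP}(BP_*,BP_*)$ on which the Adams--Novikov $d_{2p-1}$ is supposed to act (indeed $\mathfrak{g}_6=\beta_1^{p-1}\eta_{(p-3)p+3}$ appears as a generator in Theorem 3.2). Second, the arithmetic of the Theorem 4.4 template does not transfer: there the output is $\beta_1\cdot\alpha_1\beta_1^{p-1}h_{20}\gamma_s=\alpha_1\beta_1^p h_{20}\gamma_s$, carrying a visible factor of $\alpha_1$, whereas the target $\beta_1^p\beta_{i+1}$ of the present lemma is not of the form $\alpha_1\cdot(\text{something})$; your proposed identification $Y_i=\alpha_1\beta_1^{p-1}\cdot(\text{stuff})$ therefore cannot produce the stated answer. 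Finally, the element $Y_i$ and the Toda bracket you would need are never identified --- you flag this as the main obstacle, but it is exactly where the content of the lemma lives. The missing idea is the $E_2$-level relation $\alpha_1\eta_i=\beta_{i+1}\beta_{p/p}$, which lets the known Toda differential on $b_{11}=\beta_{p/p}$ be transported to $\eta_i$ purely multiplicatively.
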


\begin{proof}
   Recall from \cite{ravenelbook1} 7.3.11 Theorem (e), in the $SDSS$
\[
E_1=Ext_{BP_*BP}^{s,t}(BP_*, BP_*X^{p^2-1})\otimes E[h_{11}]\otimes P[b_{11}] \Longrightarrow Ext_{BP_*BP}^{s,t}(BP_*, BP_*X),
\]
where $BP_*X^{p^2-1}=BP_*[t_1]/\langle t_1^{p^2} \rangle$ (\cf \cite{ravenelbook1} 7.3.8 Theorem),
one has $d_2(h_{20}\mu_{i-1})=ib_{11}\beta_{i+1}$. And from its definition
we know that $\eta_i=h_{11}\mu_{i-1}$ is represented by
\[
\delta\delta \left(\frac{v_2^{p+i-1}t_2+v_2^it_2^p-v_2^{i}t_1^{p^2+p}-v_2^{i-1}v_3t_1^p}{pv_1}\right)
\]
(\cf \cite{ravenelbook1} p.288) which is also denoted by $\displaystyle{\delta\delta\left(\frac{v_2^{p+i}}{pv_1}\zeta_2\right)}$ in
 \cite{miller,xwang}.
In the cobar complex of $N_0^2=BP_*/(p^\infty, v_1^\infty)$, a straightforward computation shows that the coboundary of
\begin{align*}
 & \frac{v_2^i(t_3-t_1t_2^p-t_2t_1^{p^2}+t_1^{p^2+p+1})+v_2^{p+i-1}(t_1t_2-t_1^{p+2})-v_2^{i-1}v_3(t_2-t_1^{p+1})}{pv_1}\\
+&\frac{2v_2^{p+i}}{(p+i)p^2v_1}t_1-\frac{v_2^{p+i}}{(p+i)pv_1^2}t_1^2
\end{align*}
is $\displaystyle{\frac{(v_2^{p+i-1}t_2+v_2^it_2^p-v_2^it_1^{p^2+p}-v_2^{i-1}v_3t_1^p)\otimes t_1}{pv_1}+\frac{v_2^{i+1}}{pv_1}b_{11}}$.
This shows that in $Ext_{BP_*BP}^{2,*}(BP_*, N_0^2)$ the cohomology class
\[
\left[\frac{(v_2^{p+i-1}t_2+v_2^it_2^p-v_2^it_1^{p^2+p}-v_2^{i-1}v_3t_1^p)\otimes t_1}{pv_1}\right]=
-\left[\frac{v_2^{i+1}}{pv_1}b_{11}\right].
\]
Applying the connecting homomorphism $\delta\delta$, we get $\alpha_1\eta_i=\beta_{i+1}\beta_{p/p}$.

  From $\alpha_1\eta_{i}=\beta_{i+1}\beta_{p/p}$ and the Toda differential, one has:
\[
\alpha_1 d_{2p-1}(\eta_{i}) = d_{2p-1}(\alpha_1\eta_{i})=d_{2p-1}(\beta_{i+1}\beta_{p/p})=\alpha_1\beta_1^p\beta_{i+1}
\]
The lemma follows from $\alpha_1d_{2p-1}(\eta_{i})=\alpha_1\beta_1^p\beta_{i+1}$.
\end{proof}

{\bf Proof of Theorem A} From $\beta_{p^2/p^2-1}\in Ext^{2,q(p^3+1)}_{BP_*BP}(BP_*, BP_*)$, we know that
$d_r(\beta_{p^2/p^2-1})\in Ext^{s,t}_{BP_*BP}(BP_*, BP_*)$ subject to $t-s=q(p^3+1)-3$.  From Theorem 3.2 we know that
the corresponding $Ext^{s,t}_{BP_*BP}(BP_*, BP_*)$ is the $\mathbb{Z}/p$-module generated by
$\mathfrak{g}_1, \mathfrak{g}_3, \mathfrak{g}_4, \mathfrak{g}_6$ and $\mathfrak{g}_7, \mathfrak{g}_8$.

  $\mathfrak{g}_7=\alpha_1\beta_{(p-1)p+1}$ and $\mathfrak{g}_8=\alpha_1\beta_{p^2/p^2}$ have too low
dimension to be the target of $d_r(\beta_{p^2/p^2-1})$.

  From the Toda differential $d_{2p-1}(b_{11})=\alpha_1\beta_1^p$ we have
\begin{align*}
d_{2p-1}(\beta_1^{p^2-p-1}b_{11}\beta_2) = & \alpha_1\beta_1^{p^2-1}\beta_2 =\mathfrak{g}_1\\
d_{2p-1}(\mathfrak{g}_4)=d_{2p-1}(\beta_1^{\frac{p^2-6p+1}{2}}b_{11}^2\gamma_{\frac{p+1}{2}}) = &
2\alpha_1\beta_1^{\frac{p^2-4p+1}{2}}b_{11}\gamma_{\frac{p+1}{2}}.
\end{align*}

  From $d_{2p-1}(h_{20}b_{11}\gamma_s)=\alpha_1\beta_1^ph_{20}\gamma_s$ (\cf Theorem 4.4), we have
\[
d_{2p-1}\left(\beta_1^{\frac{p^2-4p-1}{2}}h_{20}b_{11}\gamma_{\frac{p+1}{2}}\right)
  = \alpha_1\beta_1^{\frac{p^2-2p-1}{2}}h_{20}\gamma_{\frac{p+1}{2}}=\mathfrak{g}_3.
\]

  From Lemma 5.1, we have
\[
d_{2p-1}(\mathfrak{g}_6)=d_{2p-1}(\beta_1^{p-1}\eta_{(p-3)p+3})=\beta_1^{2p-1}\beta_{(p-3)p+4}.
\]

{\small
\xymatrix@R=0pt{
&&s&&&& \\
&&&&&&\\
&&&&\mathfrak{g}_1\bullet&&\\
&&&&&&\\
&&&&&\circ\ar[uul]_{d_{2p-1}}&\\
&&&&&&\\
&&&&\mathfrak{g}_3\bullet&&\\
&&&&&&\\
&&&&&\circ\ar[uul]_{d_{2p-1}}&\\
&&&\circ&&&\\
&&&&&&\\
&&&&\mathfrak{g}_4\bullet\ar[uul]_{d_{2p-1}}&&\\
&&&&&&\\
&&&\circ&&&\\
&&&&&&\\
&&&&\mathfrak{g}_6\bullet\ar[uul]_{d_{2p-1}}&&\\
&&&&&&\\
&&1\ \! -&&\quad\mathfrak{g}_7\bullet\!\!\bullet \mathfrak{g}_8 & _{\beta_{p^2/p^2-1}}&\\
&&\ar[uuuuuuuuuuuuuuuuu]\ar[rrrr]& \shortmid&\quad \shortmid& \bullet & _{t-s}\\
&&0&q(p^3+1)-4&\quad q(p^3+1)-3&q(p^3+1)-2&}
}
Theorem A follows. \qb

 Consider the cofiber sequence
\[
\xymatrix{
S^0\ar[r]^p & S^0 \ar[r] & M
}
\]
and the induced  long exact sequence of $Ext$ groups
\[
\xymatrix @C=0.6cm @R=0.8cm{
\cdots \ar[r] & Ext_{BP_*BP}^{1,t}(BP_*, BP_*M) \ar[r]^\delta\ar[d]^{d_{2p-1}} &
  Ext_{BP_*BP}^{2,t}(BP_*, BP_*S^0) \ar[r]\ar[d]^{d_{2p-1}} & \cdots\\
\cdots \ar[r] & Ext_{BP_*BP}^{2p,*}(BP_*, BP_*M) \ar[r]^\delta &
  Ext_{BP_*BP}^{2p+1,*}(BP_*, BP_*S^0) \ar[r] & \cdots .
}
\]
One has $\delta(h_{i+2})=\beta_{p^{i+1}/p^{i+1}}$, $\delta(v_1h_{i+2})=\beta_{p^{i+1}/p^{i+1}-1}$ and $\delta(v_1^i)=i\alpha_i$.
From the Toda differential
$d_{2p-1}(\beta_{p/p})=\alpha_1\beta_1^p$, one can get a non-trivial Adams-Novikov differential in the $ANSS$ for the
Moore spectrum $M$
\[
d_{2p-1}(h_2)=v_1\beta_1^p.
\]
From the relation $h_{i+1}\beta_{p/p}^{p^i}=h_{i+2}\beta_1^{p^i}$ (\cf \cite{ravenelbook1} 6.4.7), we get the
following Adams-Novikov differential by induction
\begin{align*}
d_{2p-1}(h_{i+2})= & v_1\beta_{p^i/p^i}^p  & &\mbox{and then} & d_{2p-1}(v_1h_{i+2}) = & v_1^2\beta_{p^i/p^i}^p.
\end{align*}
Then applying the connecting homomorphism $\delta$, we have
\begin{align}\tag{5.2}
d_{2p-1}(\beta_{p^{i+1}/p^{i+1}-1})= d_{2p-1}(\delta(v_1h_{i+2}))=\delta(d_{2p-1}(v_1h_{i+2}))=\delta(v_1^2\beta_{p^i/p^i}^p)
=2\alpha_2\beta_{p^i/p^i}^p.
\end{align}

    Unfortunately we could not prove that $\alpha_2\beta_{p^i/p^i}^p$ is non-zero in $Ext_{BP_*BP}^{2p+1,*}(BP_*, BP_*)$.
Indeed $\alpha_2\beta_1^p=0$ from $\alpha_2\beta_1=0$. From Theorem A we know that $\beta_{p^2/p^2-1}$ survives to
$E_\infty$ in the $ANSS$. Thus from $d_{2p-1}(\beta_{p^2/p^2-1})=\alpha_2\beta_{p/p}^p$, we know that
$\alpha_2\beta_{p/p}^p$ MUST be zero in $Ext_{BP_*BP}^{2p+1,*}(BP_*, BP_*)$.

   As we know that $\beta_{p/p}^p\not=0$ in $Ext_{BP_*BP}^{2p,qp^3}(BP_*, BP_*)$, but $b_{11}^p=0$
in $Ext_{BP_*BP}^{2p,qp^3}(BP_*, BP_*X)$ (\cf \cite{ravenelbook1} 7.3.12 (b)). So there must be an element that converges to $\beta_{p/p}^p$ in the
$SDSS$. In the case $p=5$, $\beta_1x_{952}$ converges $\beta_{5/5}^5$ and this implies
$\alpha_2\beta_{5/5}^5=\alpha_2\beta_1x_{952}=0$.

{\bf Conjecture:} For $i\geqslant 1$, $\beta_{p^i/p^i}^p\not=0$ in $Ext_{BP_*BP}^{2p,qp^{i+2}}(BP_*, BP_*)$. But $b_{1i}^p=0$ in \\
$Ext_{BP_*BP}^{2p, qp^{i+2}}(BP_*, BP_*X^{p^i-1})$, where $X^{p^i-1}\hookrightarrow T(1)$ is characterized by
\[
BP_*X^{p^i-1}=BP_*\{1,t_1, t_1^2, \cdots, t_1^{p^i-1}\}.
\]
 Thus in the $SDSS$
\begin{align*}
Ext_{BP_*BP}^{s,t}(BP_*, BP_*X^{p^i-1})\otimes E[h_{1i-1}]\otimes P[b_{1i-1}] & \Longrightarrow Ext_{BP_*BP}^{*,*}(BP_*, BPX^{p^{i-1}-1})\\
Ext_{BP_*BP}^{s,t}(BP_*, BP_*X^{p^{i-1}-1})\otimes E[h_{1i-2}]\otimes P[b_{1i-2}] & \Longrightarrow Ext_{BP_*BP}^{*,*}(BP_*, BPX^{p^{i-2}-1})\\
                                                                              & \vdots \\
Ext_{BP_*BP}^{s,t}(BP_*, BP_*X)\otimes E[\alpha_1]\otimes P[\beta_1] & \Longrightarrow Ext_{BP_BP}^{2p,qp^{i+2}}(BP_*, BP_*)
\end{align*}
there is a $\zeta_i\in Ext_{BP_*BP}^{2p-2, q(p^{i+2}-p^i)}(BP_*, BP_*X^{p^i-1})$ such that $b_{1i-1}\zeta_i$ converges to
$\beta_{p^i/p^i}^p$. This implies that:
\begin{enumerate}
\item For $i=1$, $\alpha_2\beta_{p/p}^p=\alpha_2\beta_1\zeta_1=0$ in $Ext_{BP_*BP}^{2p, qp^3}(BP_*, BP_*)$.
\item For $i>1$, $\alpha_2\beta_{p^i/p^i}^p=\alpha_2\beta_{p^{i-1}/p^{i-1}}\zeta_i=\alpha_1\beta_{p^{i-1}/p^{i-1}-1}\zeta_i$,
which is NOT zero in\\ $Ext_{BP_*BP}^{2p, qp^{i+2}}(BP_*, BP_*)$. 
\end{enumerate}
Then from (5.2) we know that for $i>1$, $\beta_{p^{i+1}/p^{i+1}-1}$ does not exist.

\end{document}